\newtheorem{dummy}{anything}[section] 
\newtheorem*{thma}{Theorem A}
\newtheorem*{thmb}{Theorem B}
\newtheorem*{question}{Question} 
\newtheorem{lemma}[dummy]{Lemma}
\theoremstyle{definition}
\newtheorem{definition}[dummy]{Definition}
 \newtheorem{remark}[dummy]{Remark}
 \newtheorem*{acknowledgement}{Acknowledgement}
\newcommand
{\eqncount}{\setcounter{equation}{\value{dummy}}%
\addtocounter{dummy}{1}}
\newcommand{\cP}{\mathcal P}
\newcommand{\cE}{\mathcal E}
\newcommand{\bz}{\mathbf z}
\newcommand{\bw}{\mathbf w}
\newcommand{\bH}{\mathbf H}
\newcommand{\bZ}{\mathbf Z}
\newcommand{\bQ}{\mathbf Q}
\newcommand{\bC}{\mathbb C}
\newcommand{\bR}{\mathbb R}
\newcommand{\cy}[1]{\bZ/{#1}}
\newcommand{\wP}{\Gamma}
\newcommand{\bd}{\partial}
\newcommand{\vv}{\ | \ }
\newcommand{\ZE}{\bZ E}
\newcommand{\mmatrix}[4]{\left (\vcenter
{\xymatrix@C-2pc@R-2pc{#1&#2\\#3&#4} }
\right )}
\DeclareMathOperator{\wh}{Wh}
\DeclareMathOperator{\Mod}{mod}
\newcommand{\hsp}[1]{{\hphantom{#1}}}
 \newcommand{\la}{\langle}
  \newcommand{\ra}{\rangle}
\begin{document}
\title[Free Actions on Products of Spheres]
{Examples of Free Actions on Products of Spheres}
\author{Ian Hambleton}
\address{Department of Mathematics \& Statistics
\newline\indent
McMaster University
\newline\indent
Hamilton, ON L8S 4K1, Canada}
\email{ian@math.mcmaster.ca}
\author{\"Ozg\"un \"Unl\"u}
\address{Department of Mathematics \& Statistics
\newline\indent
McMaster University
\newline\indent
Hamilton, ON L8S 4K1, Canada}
\email{unluo@math.mcmaster.ca}
\date{May 21, 2008}
\thanks{\hskip -11pt  Research partially supported by NSERC Discovery Grant A4000. }

\begin{abstract}
We construct a  non-abelian extension  $\wP$ of $S^1$ by $\cy 3 \times \cy 3$, and prove that $\wP$ acts freely and smoothly on $S^5 \times S^5$. This gives new actions on $S^5 \times S^5$ for an infinite family $\cP$ of finite $3$-groups.  We also show that any finite odd order subgroup of the exceptional Lie group $G_2$ admits a free smooth action on $S^{11}\times S^{11}$.  This gives new  actions on $S^{11}\times S^{11}$ for an infinite family $\cE $ of finite groups.
We explain the significance of these families $\cP $, $\cE $ for the general existence problem, and correct some mistakes in the literature. 
\end{abstract}

\maketitle

\section*{Introduction}
In this paper we construct some new examples of smooth, free,  finite group actions on a product of two spheres of the same dimension.
A necessary condition discovered by Conner  \cite{conner1} is that $G$ has rank at most two, meaning that $G$ does not contain an elementary abelian subgroup of order $p^3$, for any prime $p$.

\begin{question}
What group theoretic conditions characterize the rank two finite groups which can act freely and smoothly on $S^n \times S^n$, for some $n\geqq 1$~?
\end{question}

It was shown by Oliver \cite{oliver1} that  the alternating group $A_4$ of order $12$  has rank two, but does not admit such an action, so the rank two condition is not sufficient. It was also observed by Adem-Smith \cite[p.~423]{adem-smith}  that $A_4$ is a subgroup of every rank two non-abelian simple group, so all these are ruled out too. 

In order to answer this question, it is useful to have more examples. In this note we present two new infinite families of such actions. Let $\wP$ be the Lie group given by the following presentation
\begin{equation*}
\wP= \left\langle a,b,z \vv z  \in S^1 \text{, }a^{3}=b^{3}=[a,z ]=[b,z ]=1\text{, }
[a,b]= \omega\right\rangle
\end{equation*} 
where $[x,y]=x^{-1}y^{-1}xy$ and $\omega=e^{2 \pi i/3 }$ in $S^1 \subseteq \mathbb{C}$. We make an explicit equivariant glueing construction to prove our first result.
\begin{thma}  The group $\wP$ acts freely and smoothly on $S^5\times S^5$.
\end{thma}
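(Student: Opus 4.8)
The plan is to realise $\wP$ as a subgroup of $U(3)$, begin from the resulting linear action on $S^5\times S^5$, and correct it by an equivariant glueing concentrated near its (small) singular set. Let $\rho\colon\wP\to U(3)$ be the irreducible unitary representation with $\rho(z)=z\cdot I$, $\rho(a)$ the cyclic permutation matrix of $\bC^3$, and $\rho(b)=\mathrm{diag}(1,\omega,\omega^2)$; one checks directly that $\rho([a,b])=\omega\cdot I$, so $\rho$ is well defined on $\wP$. Let $\wP$ act diagonally on $S^5\times S^5=S(\bC^3)\times S(\bC^3)$ via $\rho$ on each factor. First I would analyse this action. Since $\rho(z)=z\cdot I$ acts freely on each sphere, the central circle $S^1\le\wP$ acts freely on $S^5\times S^5$, and a point $(v,w)$ has nontrivial stabiliser only if some noncentral $g$ of order $3$ fixes both $v$ and $w$; as $1$ is then a simple eigenvalue of $\rho(g)$, this forces $v$ and $w$ to span the same complex line. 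Hence the action is free off the invariant submanifold $P=\{(v,w):w\in\bC v\}\cong S^5\times S^1$, and in fact free off the twelve disjoint $2$-tori $C_g\times C_g\subset P$, one for each noncentral subgroup $\langle g\rangle\le\wP$ of order $3$, where $C_g\subset S^5$ is the unit circle of the $1$-eigenline of $\rho(g)$; the isotropy along $C_g\times C_g$ is $\langle g\rangle\cong\cy3$, and $\wP$ permutes these tori in four orbits of three.

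The key computation is that the normal data along $P$ is \emph{free}. A direct calculation of $T(S^5\times S^5)/TP$ at a point of $C_g\times C_g$ shows that, as a representation of the isotropy group $\langle g\rangle\cong\cy3$, the normal fibre is $\bC_\omega\oplus\bC_{\omega^2}$ — it has no trivial summand. Consequently $\langle g\rangle$ acts freely on the normal $3$-sphere, so $\wP$ acts \emph{freely} on the unit normal bundle $S(\nu(P))$, an $S^3$-bundle over $P$. Fixing a $\wP$-invariant tubular neighbourhood $D(\nu(P))$ of $P$, the complement $X=\overline{S^5\times S^5\smallsetminus D(\nu(P))}$ is a compact manifold with a \emph{free} smooth $\wP$-action and $\partial X=S(\nu(P))$, and
\[
S^5\times S^5\;=\;X\cup_{S(\nu(P))}D(\nu(P)),
\]
so that \emph{all} of the nonfreeness has been confined to the zero-section $P$ of the disk bundle $D(\nu(P))$.

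It remains to excise $D(\nu(P))$ and glue in, along the same boundary $S(\nu(P))$, an equivariant filling on which $\wP$ acts freely, in such a way that the underlying manifold does not change. It is cleanest to organise this after dividing out the free central $S^1$: on the closed $9$-manifold $N=(S^5\times S^5)/S^1$ the quotient group $\cy3\times\cy3=\wP/S^1$ acts with singular set the twelve disjoint circles $\bar C_g$ (images of $C_g\times C_g$), each with isotropy $\langle\bar g\rangle\cong\cy3$ and free normal $S^7$-bundle. I would then perform $(\cy3\times\cy3)$-equivariant surgery on these circles. For one orbit of three, an invariant tube is $(\cy3\times\cy3)\times_{\langle\bar g\rangle}(S^1\times D^8)$, and one replaces $S^1\times D^8$ by $D^2\times S^7$ fibrewise; since $\langle\bar g\rangle$ acts freely on the $S^7$-factor it acts freely on $D^2\times S^7$ \emph{however it may act on $D^2$}, and $(\cy3\times\cy3)\times_{\langle\bar g\rangle}(D^2\times S^7)$ is a free $(\cy3\times\cy3)$-manifold. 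Carrying this out for all four orbits, and lifting the surgery through the principal $S^1$-bundle $S^5\times S^5\to N$ (an unobstructed lift, since the relevant neighbourhoods have vanishing $H^2$), produces a closed smooth manifold $M$ with a free $\wP$-action, and it remains to identify $M$ with $S^5\times S^5$.

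This last identification, together with checking that the surgeries can actually be realised, is the main obstacle. One must verify that the relevant surgery obstructions vanish and that the cut-and-paste can be arranged so that $M$ has the homotopy type, the normal invariants, and the smooth structure of $S^5\times S^5$. In our favour: the surgeries are on circles (codimension $8$, and an embedded circle in an orientable manifold has trivial normal bundle), so $M$ stays simply connected and no wild fundamental group appears; the isotropy groups $\cy3$ have odd order, so the Whitehead and reduced projective class groups controlling the algebraic $K$- and $L$-theoretic obstructions are small and manageable; and the ambient $S^1$-bundle provides the extra room needed to control the lens-space boundary data produced by the surgery. A final appeal to Wall's realisation theorem adjusts $M$, up to connected sum with a homotopy $10$-sphere, to $S^5\times S^5$ itself. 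Making all of this completely explicit — so that one literally sees $S^5\times S^5$ reassembled from the free pieces — is the substance of the construction.
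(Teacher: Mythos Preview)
Your singular-set analysis for the diagonal action $\rho\times\rho$ is correct, and the broad strategy --- start from a non-free linear $\wP$-action, confine the singularities to tubes, replace those tubes by free pieces --- is the paper's. But the execution diverges from the paper at two points, and the second is a genuine gap.

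\textbf{Different initial action, different glueing.} The paper does \emph{not} begin with the diagonal action. It uses $\varphi\times\psi_0$, where $\varphi=\rho$ on the first $S^5$ but $\psi_0$ is a representation that factors through $\wP/S^1\cong\cy3\times\cy3$. With this asymmetric choice the first projection $p_0\colon S^5\times S^5\to S^5$ is a $\wP$-equivariant $S^5$-bundle with structure group $U(3)$, and the singular set sits over six invariant circles in the base $S^5$. The replacement pieces are the restrictions, over those tubes, of two \emph{other} model actions $\varphi\times\psi_1$ and $\varphi\times\psi_2$; the glueing is an explicit $SU(3)$-valued cocycle $\Theta_m(\bz)$ checked by hand to be $\wP$-equivariant. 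The entire cut-and-paste therefore stays inside the category of $U(3)$ sphere bundles over $S^5$, so the output is automatically such a bundle, and since $\pi_4(U(3))=0$ it is the trivial one, i.e.\ $S^5\times S^5$. No $L$-theory, no smoothing theory, no surgery obstructions appear; Theorem~A is literally a formula check.

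\textbf{The gap.} Your proposed filling --- index-$1$ surgery on the twelve null-homotopic circles $\bar C_g$ in the simply connected $9$-manifold $N$, replacing each $S^1\times D^8$ by $D^2\times S^7$ --- changes the homotopy type. Surgery on a null-homotopic framed circle in a simply connected manifold produces a connected sum with an $S^7$-bundle over $S^2$, so after all four orbits you obtain $N\mathbin{\#}12(S^2\times S^7)$ (or its twisted variant), with twelve new classes in $H_2$ and in $H_7$. The lift through the principal $S^1$-bundle therefore cannot be homotopy equivalent to $S^5\times S^5$. Your closing appeal to Whitehead groups, $\widetilde K_0$, and ``Wall's realisation theorem adjusts $M$, up to connected sum with a homotopy $10$-sphere'' addresses exotic smooth structures on an already correct homotopy type; it does nothing to remove surplus homology. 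To rescue this route you would have to set up a genuine equivariant surgery problem for the noncompact Lie group $\wP$ with target $S^5\times S^5$ and carry out further equivariant middle-dimensional surgeries --- none of which is in your outline. The paper's asymmetric choice of initial action is exactly what makes all of this unnecessary.
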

For a positive integer $k \geq 3$, let $P(k)$ be the group of order $3^{k}$ given by the following presentation
\begin{equation*}
P(k)= \left\langle a,b,c \vv a^{3}=b^{3}=c^{3^{k-2}}=[a,c]=[b,c]=1\text{, }
[a,b]= c^{3^{k-3}} \right\rangle
\end{equation*} 
We will write
$$\cP =\{P(k) \vv k\geq 3 \}$$
and note that $\cP$ is a collection of subgroups of $\wP$ (take $c = e^{2\pi i/3^{k-2}}\in S^1$). Therefore Theorem A constructs free smooth $P(k)$-actions on $S^5 \times S^5$ for all $k\geq 3$. Note that $P(3) \cong (\cy 3 \times \cy 3)\rtimes \cy 3$  is the extraspecial $3$-group of order $27$ and exponent $3$.

\medskip
We prove our second result by using equivariant surgery theory to modify a construction based on the exceptional Lie group $G_2$ of dimension $14$.  
\begin{thmb} All odd order finite subgroups of  $G_2$  act freely and smoothly on $S^{11}\times S^{11}$.
\end{thmb}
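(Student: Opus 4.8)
The plan is to prove Theorem~B by restricting to each odd-order subgroup $H\le G_2$ a single construction attached to $G_2/Sp(1)$, and then repairing its universal cover by equivariant surgery; oddness of $|H|$ will enter both to control the isotropy groups and to control the surgery obstruction. Write $\mathbb{R}^7=\operatorname{Im}\mathbb{O}$, so $G_2=\Aut(\mathbb{O})$ acts on $S^6=S(\mathbb{R}^7)$ with point stabilizer $SU(3)$; since $SU(3)$ is transitive on unit vectors of the standard $\mathbb{C}^3$, the group $G_2$ is transitive on ordered orthonormal $2$-frames in $\mathbb{R}^7$, with stabilizer the $Sp(1)=SU(2)\subset SU(3)\subset G_2$ fixing a unit vector of $\mathbb{C}^3$. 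Thus $X:=G_2/Sp(1)\cong V_2(\mathbb{R}^7)$, the Stiefel manifold of $2$-frames in $\mathbb{R}^7$ (the unit tangent bundle of $S^6$), and the Gysin sequence gives $H^*(X;\mathbb{Z})=\mathbb{Z},0,0,0,0,0,\mathbb{Z}/2,0,0,0,0,\mathbb{Z}$ in degrees $0,\dots,11$; in particular $X$ is a mod-$p$ homology $11$-sphere for every odd prime $p$. The key observation is that the isotropy of the $H$-action on $X$ at a coset $gSp(1)$ is $H\cap g\,Sp(1)\,g^{-1}$, a finite odd-order subgroup of a conjugate of $Sp(1)$, and every finite odd-order subgroup of $Sp(1)$ is cyclic; hence all $H$-isotropy on $X$ is cyclic, in particular $p$-periodic for every odd $p$.

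\textbf{Step 2: a fiberwise-free sphere bundle.} Let $Sp(1)$ act on $\mathbb{H}^3$ by left scalar multiplication; then $S^{11}=S(\mathbb{H}^3)$ carries a fixed-point-free action on which every cyclic subgroup acts freely. Set $E:=G_2\times_{Sp(1)}S^{11}$, a closed smooth $22$-manifold fibering over $X$ with fiber $S^{11}$ (in fact a principal $G_2$-bundle over $S^{11}/Sp(1)=\mathbb{H}P^2$). Left translation by $G_2$ on the first factor gives a smooth action of every $H\le G_2$ on $E$, and it is free: $h\cdot[g,s]=[g,s]$ forces $g^{-1}hg$ to fix $s\in S^{11}$, whence $g^{-1}hg=1$. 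The Serre spectral sequence of $S^{11}\to E\to X$ collapses, since the Euler class and all potential differentials land in the vanishing groups $H^{\geq12}(X)$; therefore $E$ has the rational homology of $S^{11}\times S^{11}$, and $H_*(E;\mathbb{Z})$ agrees with $H_*(S^{11}\times S^{11};\mathbb{Z})$ except for a single $\mathbb{Z}/2$ in degrees $5$ and $16$ inherited from the $2$-torsion of $X$; moreover $H$ acts trivially on $H_*(E)$ since $G_2$ is connected. Put $M:=E/H$, a closed smooth $22$-manifold with $\pi_1(M)=H$ and universal cover $E$.

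\textbf{Step 3: equivariant surgery.} It remains to modify $E$, by $H$-equivariant surgery on $M$, until the universal cover is a genuine homotopy $S^{11}\times S^{11}$. The unwanted $\mathbb{Z}/2$'s lie in degrees $5$ and $16$, strictly away from the middle dimension $11$; $E$ is simply connected with $\pi_5(E)\cong H_5(E)=\mathbb{Z}/2$ a trivial $\mathbb{Z}H$-module, so a framed embedded $5$-sphere (its normal bundle is trivial since $\pi_5(BSO)=0$) can be surgered equivariantly, and surgery below the middle dimension then makes the universal cover $10$-connected, clearing degree $16$ by Poincaré duality. One is then reduced to the middle-dimensional problem of realizing the Poincaré complex with fundamental group $H$ and universal cover $S^{11}\times S^{11}$ by a closed smooth manifold: choose a normal invariant and kill the resulting surgery obstruction in $L_{22}(\mathbb{Z}H)$. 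This is the main obstacle, and it is where oddness of $|H|$ is essential — the $2$-primary parts of $L_*(\mathbb{Z}H)$, $\widetilde K_0(\mathbb{Z}H)$ and $\wh(H)$ vanish and the remaining obstruction groups are accessible (Dress induction and surgery with odd-order fundamental group), so the obstruction can be normalized to zero, if need be after a connected sum with an auxiliary $22$-manifold or a $\CP^2$-type correction of the normal invariant, and the universal cover can be pinned down to be $S^{11}\times S^{11}$ rather than a fake or exotic copy. Since $X=G_2/Sp(1)$ and $E$ are fixed once and for all and only the subgroup varies, this treats every odd-order $H\le G_2$ uniformly, which is Theorem~B.
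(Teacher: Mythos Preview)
Your geometric setup (Steps 1--2) is essentially the paper's: the paper's $Y$ is the sphere bundle of $\xi\oplus\xi\oplus\xi$ where $\xi=G_2\times_{SU(2)}\bC^2$, and since left $Sp(1)$-multiplication on $\mathbb H$ is the standard $SU(2)$-action on $\bC^2$, your $E=G_2\times_{Sp(1)}S(\mathbb H^3)$ is the same manifold. (The paper in fact shows $Y\cong S^{11}\times V_2(\bR^7)$, hence parallelizable; you should record this, as it is what makes the framed surgery go.) Your Step~1 remark about cyclic isotropy on $X$ is irrelevant once the action on $E$ is free.

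The genuine gap is Step~3. First, the assertion that ``the $2$-primary parts of $L_*(\bZ H)$ \dots\ vanish'' is false: even for the trivial group $L_2(\bZ)=\cy2$ via the Arf invariant, and this $\cy2$ persists in $L_{22}(\bZ H)$ for every $H$. So you cannot dispose of the surgery obstruction by a blanket $2$-local vanishing statement, and ``connected sum with an auxiliary $22$-manifold'' or ``$\CP^2$-type correction'' are not operations available in this problem (there is no target Poincar\'e complex whose normal invariant you are varying; you are surgering a specific closed manifold). What is actually needed is the paper's argument: for $|H|$ odd, Bak--Wall show that an element of $L'_2(\bZ H)$ is detected by its multisignature and its ordinary Arf invariant; the multisignature vanishes because the form comes from a closed manifold, and the Arf invariant vanishes by Browder's theorem since $22\neq 2^k-2$. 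Second, the paper tracks the effect of the preliminary $5$-surgery carefully (the $\cy2$ in $\pi_5$ produces the projective $\ZE$-module $\langle 2,I\rangle$ in $\pi_6$, and Swan's results are used to show the middle-dimensional form eventually lives on a free module); your sentence ``surgery below the middle dimension then makes the universal cover $10$-connected'' hides this. Finally, you do not address why the resulting universal cover is the \emph{standard} $S^{11}\times S^{11}$ rather than a connected sum with an exotic sphere; the paper invokes the vanishing of the $p$-primary part of $\pi_{22}^S$ for $p\geq 3$.
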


Information about the finite subgroups of $G_2$ can be found in  \cite{cohen-wales1}. Here is a specific family of examples.
For a prime number $p$, let $E(p)$ be the group of order $3p^2$ given by the following presentation
\begin{equation*}
E(p)=\left\langle u,v,w\vv  u^{p}=v^{p}=w^{3}=[u,v]=1\text{, }[u,w]=u^{-2}v^{-1}\text{, }[v,w]=uv^{-1} \right\rangle\ .
\end{equation*} 
We will write
$$\cE =\{E(p) \vv p\text{ is an odd prime}\}\ .$$
The group $E(2)$ is isomorphic to the alternating group $A_4$ of order $12$, and the group $E(3)$  is another presentation for the extraspecial group $P(3)$. An explicit isomorphism $P(3) \cong E(3)$ is given by the map
$$ a \mapsto w  \text{\ , \ \ \ } b \mapsto vu \text{\ , \ and \ \ } c \mapsto v^{-1}u \ .$$
The groups $E(p)$ are all subgroups of $SU(3)$, and hence contained in the exceptional Lie group $G_2$. For $p=3$, 
let $\omega = e^{2 \pi i/3 }$ and consider the representation of $P(3)$ as follows:
\begin{equation*}
a= \left[ \begin{array}{ccc}
0 & 1 & 0 \\ 
0 & 0 & 1 \\ 
1 & 0 & 0
\end{array}
\right] \text{, } b= \left[ 
\begin{array}{ccc}
1 & 0 & 0 \\ 
0 & \omega & 0 \\ 
0 & 0 & \omega ^{2}
\end{array}
\right] \text{, and } c= \left[ 
\begin{array}{ccc}
\omega & 0 & 0 \\ 
0 & \omega & 0 \\ 
0 & 0 & \omega
\end{array} \right]\ .
\end{equation*}
For $p \neq 3$, define $\alpha =e^{2\pi i/p}$ and $\beta =e^{2\pi i(p-2)/p}$
and consider a representation of $E(p)$ as follows: 
\[
u=\left[ 
\begin{array}{ccc}
\alpha  & 0 & 0 \\ 
0 & \alpha  & 0 \\ 
0 & 0 & \beta 
\end{array}
\right]  \text{, } v=\left[ 
\begin{array}{ccc}
\alpha  & 0 & 0 \\ 
0 & \beta  & 0 \\ 
0 & 0 & \alpha 
\end{array}
\right] \text{, and } w=\left[ 
\begin{array}{ccc}
0 & 1 & 0 \\ 
0 & 0 & 1 \\ 
1 & 0 & 0
\end{array}
\right] \ .
\]
Therefore, Theorem B proves the existence of free smooth $E(p)$-actions on $S^{11} \times S^{11}$, for all odd primes $p$.

\medskip
We introduce one more family of $3$-groups 
\begin{equation*}
B(k,\epsilon )= \left\langle a,b,c \vv a^{3}=b^{3}=c^{3^{k-2}}=[b,c]=1\text{, } [a,c]=b \text{, }
[a,b]= c^{\epsilon  3^{k-3}} \right\rangle
\end{equation*} 
where $k\geq 4$, and $\epsilon $ is $1$ or $-1$.
One can check that $B(k,\epsilon)$ is not a subgroup of $SU(3)$
for $k > 4$ or $\epsilon = 1$. However, 
the group $B(4,-1)$ is a subgroup of $SU(3)$, by the following representation
\begin{equation*}
a = \left[ \begin{array}{ccc}
0 & 1 & 0 \\ 
0 & 0 & 1 \\ 
1 & 0 & 0
\end{array}
\right] \text{, } b = \left[ 
\begin{array}{ccc}
1 & 0 & 0 \\ 
0 & \gamma ^{3} & 0 \\ 
0 & 0 & \gamma ^{6}
\end{array}
\right] \text{, } c= \left[ 
\begin{array}{ccc}
\gamma ^5 & 0 & 0 \\ 
0 & \gamma ^8 & 0 \\ 
0 & 0 & \gamma ^5
\end{array} \right]
\end{equation*}
where $\gamma =e^{2\pi i/9}$. Therefore Theorem B shows that $B(4,-1)$ acts freely and smoothly on $S^{11}\times S^{11}$. 

In Section 3 we make some concluding remarks about finite $3$-groups and the role of the families $\cP$ and $\cE$ in the general existence problem.

\begin{acknowledgement} The authors would like to thank Alejandro Adem, Dave Benson, Jim Davis and Matthias Kreck for useful conversations and correspondence.
\end{acknowledgement}

\section{An explicit construction}

The idea of the construction is to start with a non-free action of $\wP$ on $S^5\times S^5$ and do an equivariant ``cut-and-paste" operation on it to get rid of the fixed points. 
This is an equivariant surgery construction, but none of the theory of equivariant surgery is needed: the proof of Theorem A just involves checking some explicit formulas.

For the initial action  on $S^5\times S^5$, the singular set is contained in a $\wP$-invariant disjoint union $U$ of six copies of $S^1\times D^4 \times S^5$.  We replace this part by a new free action on $U$, which is  $\wP$-equivariantly diffeomorphic to the original one on its boundary.
We will use the following four representations of $\wP$ in our construction.
\begin{enumerate}\addtolength{\itemsep}{0.2\baselineskip}
\item An irreducible representation $\varphi \colon \wP\to U(3)$:
\begin{equation*}
a\longmapsto \left[ \begin{array}{ccc}
0 & 1 & 0 \\ 
0 & 0 & 1 \\ 
1 & 0 & 0
\end{array}
\right] \text{, }b\longmapsto \left[ 
\begin{array}{ccc}
1 & 0 & 0 \\ 
0 & \omega & 0 \\ 
0 & 0 & \omega ^{2}
\end{array}
\right] \text{, }z \longmapsto \left[ 
\begin{array}{ccc}
z  & 0 & 0 \\ 
0 & z  & 0 \\ 
0 & 0 & z 
\end{array}
\right]
\end{equation*}

\item Three representations that pullback from representations of $\wP/S^1$:
\smallskip
\begin{enumerate}\addtolength{\itemsep}{0.1\baselineskip}
\item $\psi _{0}\colon \wP\to U(3)$ given by:
\begin{equation*}
a\longmapsto \left[ 
\begin{array}{ccc}
\omega & 0 & 0 \\ 
0 & \omega & 0 \\ 
0 & 0 & \omega
\end{array}
\right] \text{, }b\longmapsto \left[ 
\begin{array}{ccc}
\omega & 0 & 0 \\ 
0 & \omega & 0 \\ 
0 & 0 & 1
\end{array}
\right] \text{, }z \longmapsto \left[ 
\begin{array}{ccc}
1 & 0 & 0 \\ 
0 & 1 & 0 \\ 
0 & 0 & 1
\end{array}
\right]
\end{equation*}

\item $\psi _{1}\colon \wP\to U(3)$ given by:
\begin{equation*}
a\longmapsto \left[ 
\begin{array}{ccc}
\omega & 0 & 0 \\ 
0 & \omega & 0 \\ 
0 & 0 & \omega
\end{array}
\right] \text{, }b\longmapsto \left[ 
\begin{array}{ccc}
\omega & 0 & 0 \\ 
0 & \omega ^{2} & 0 \\ 
0 & 0 & \omega ^{2}
\end{array}
\right] \text{, }z \longmapsto \left[ 
\begin{array}{ccc}
1 & 0 & 0 \\ 
0 & 1 & 0 \\ 
0 & 0 & 1
\end{array}
\right]
\end{equation*}

\item $\psi _{2}\colon \wP\to U(3)$ given by:
\begin{equation*}
a\longmapsto \left[ 
\begin{array}{ccc}
\omega & 0 & 0 \\ 
0 & \omega & 0 \\ 
0 & 0 & \omega
\end{array}
\right] \text{, }b\longmapsto \left[ 
\begin{array}{ccc}
\omega ^{2} & 0 & 0 \\ 
0 & 1 & 0 \\ 
0 & 0 & 1
\end{array}
\right] \text{, }z \longmapsto \left[ 
\begin{array}{ccc}
1 & 0 & 0 \\ 
0 & 1 & 0 \\ 
0 & 0 & 1
\end{array}
\right]
\end{equation*}
\end{enumerate}
\end{enumerate}
These representations give an action $\Phi \colon \wP\times Y\to Y$ on $Y=S^5$ 
given by:
$$\Phi (g,\bz) =\varphi (g)\bz,$$
where $\bz = (z_1, z_2, z_3)\in S^5$, with $z_i \in \bC$ and $\|\bz\| = 1$.
\begin{definition}[Model actions on $S^5\times S^5$]
For $i=0$, $1$, or $2$ we obtain an action $\Phi _{i}\colon \wP\times X_{i}\to X_{i}$ on $X_{i}=S^5\times 
S^5$ 
given by:
$$\Phi _{i}(g,(\bz,\bw)) =(\varphi (g)\bz,\psi _{i}(g)\bw),$$
where $\bz, \bw\in S^5$. 
\end{definition}
To simplify our notations, we let $\Phi (g,\bz)=g\cdot \bz$ and $\Phi
_{i}(g,(\bz,\bw))=g\cdot (\bz,\bw)$, for any $\bz\in Y$
and $(\bz,\bw)\in X_{i}$. 
\begin{remark}
We will modify the initial action $(X_0, \Phi_0)$ by ``equivariant Dehn surgery" to obtain a free $\wP$-action on $S^5\times S^5$, with replacement pieces coming from $(X_1, \Phi_1)$ and $(X_2, \Phi_2)$.
\end{remark}
For $i=0$, $1$, or $2$, we define a $\wP$-equivariant map 
\begin{equation*}
p_{i}\colon X_{i}\to Y\text{ given by }p_{i}(\bz,\bw)=\bz\ .
\end{equation*}
Note that $p_{i}$ is in fact a $\wP$-equivariant sphere bundle map.
Fix $0<\varepsilon <\frac{1}{9}$, and define three subspaces 
$V_{1}$, $V_{2}$, and $V_{0}$ of $Y$ as follows: 
$$V_1 = \{ a^k\cdot \bz \in Y \vv 0\leq k \leq 2, |z_2|^2 + |z_3|^2 \leq \varepsilon\}, \quad V_2 = PV_1$$
where 
$$ P = \frac{1}{\sqrt{3}}\left[\vcenter{\xymatrix@=2pt{1 & \omega  & 1 \\ 
1 & 1 & \omega  \\ 
\omega  & 1 & 1}}
\right] \ \in \ U(3)\ .$$
Note that $P\varphi (a)P^{-1}=\varphi (a)$ and $P\varphi (b)P^{-1}=\varphi
(a^{2}b)$, and let $V_{0}$ be the closure of $Y-V_{1}\cup V_{2}$.

\begin{lemma}
$V_{1}\cap V_{2}=\emptyset $.
\end{lemma}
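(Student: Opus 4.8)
The plan is to make $V_1$ and $V_2$ completely explicit inside $S^5\subseteq\bC^3$ and then rule out a point of $V_1\cap V_2$ by an elementary norm estimate. First I would unwind the definition of $V_1$. Since $\varphi(a)$ acts on $\bC^3$ by cyclically permuting the coordinates, the set $\{\bz\in Y\vv |z_2|^2+|z_3|^2\leq\varepsilon\}$ is exactly $\{\bz:|z_1|^2\geq 1-\varepsilon\}$, and taking the union of its images under $\varphi(a^0)$, $\varphi(a)$, $\varphi(a^2)$ gives
$$V_1=\{\,\mathbf u\in S^5 : |u_j|^2\geq 1-\varepsilon\ \text{for some}\ j\in\{1,2,3\}\,\}.$$
In particular $V_1$ is $\varphi(a)$-invariant; and since $\varphi(a)P=P\varphi(a)$ (equivalently $P\varphi(a)P^{-1}=\varphi(a)$), the set $V_2=PV_1$ satisfies $\varphi(a)V_2=P\varphi(a)V_1=PV_1=V_2$, so $V_1\cap V_2$ is invariant under cyclic permutation of the coordinates.

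Then I would assume, towards a contradiction, that $\mathbf u\in V_1\cap V_2$. After applying a power of $\varphi(a)$ I may take $|u_1|^2\geq 1-\varepsilon$, so that $|u_2|^2+|u_3|^2\leq\varepsilon$, i.e.\ $\|\mathbf u-u_1\mathbf e_1\|\leq\sqrt\varepsilon$ with $\mathbf e_1=(1,0,0)$. On the other hand $\mathbf u\in V_2=PV_1$ means $P^{-1}\mathbf u\in V_1$, so $|(P^{-1}\mathbf u)_k|^2\geq 1-\varepsilon$ for some $k$; since $P^{-1}\mathbf u$ is a unit vector this gives $\|P^{-1}\mathbf u-(P^{-1}\mathbf u)_k\mathbf e_k\|\leq\sqrt\varepsilon$. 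Because $P^{-1}$ is an isometry, the triangle inequality yields
$$\bigl\|\,u_1\,(P^{-1}\mathbf e_1)-(P^{-1}\mathbf u)_k\,\mathbf e_k\,\bigr\|\leq 2\sqrt\varepsilon .$$
But every entry of $P$ has modulus $1/\sqrt3$, hence so does every entry of its conjugate transpose $P^{-1}$; in particular all three coordinates of $P^{-1}\mathbf e_1$ have modulus $1/\sqrt3$. So the vector on the left has, in its two coordinates with index $\neq k$, entries of modulus $|u_1|/\sqrt3$, and its norm is therefore at least $|u_1|\sqrt{2/3}\geq\sqrt{2(1-\varepsilon)/3}$. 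Hence $2(1-\varepsilon)/3\leq 4\varepsilon$, i.e.\ $\varepsilon\geq 1/7$, contradicting $\varepsilon<\tfrac19$.

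I expect no real difficulty beyond keeping the constants straight: the content of the lemma is simply that $P$ carries the coordinate axis $\bC\mathbf e_1$ to a line whose unit vectors are maximally spread out (each coordinate of modulus $1/\sqrt3$), hence far in the $S^5$ metric from every coordinate axis, and the chosen bound $\varepsilon<\tfrac19$ is comfortably below the threshold $1/7$ that the estimate requires. If one preferred to avoid the vector computation, one could instead bound each coordinate $(P^{-1}\mathbf u)_k=\tfrac1{\sqrt3}\sum_j\zeta_{kj}u_j$ (with $|\zeta_{kj}|=1$) directly, using $|u_2|+|u_3|\leq\sqrt{2\varepsilon}$ from Cauchy--Schwarz to get $|(P^{-1}\mathbf u)_k|\leq\tfrac1{\sqrt3}(1+\sqrt{2\varepsilon})$, and then check that $\tfrac13(1+\sqrt{2\varepsilon})^2<1-\varepsilon$ for $\varepsilon<\tfrac19$.
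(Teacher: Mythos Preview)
Your proof is correct and follows essentially the same approach as the paper's: both arguments reduce to the single observation that every entry of $P$ (hence of $P^{-1}=P^*$) has modulus $1/\sqrt{3}$, and then derive an elementary norm inequality contradicting $\varepsilon<\tfrac19$. The paper's version is slightly more direct---it takes $\bz=P\bz'$ with $\bz'\in V_1$ and bounds each $|z_q|$ from below by $\tfrac1{\sqrt3}(|z'_k|-|z'_i|-|z'_j|)$, concluding $|z_q|^2+|z_r|^2\geq\tfrac23-2\varepsilon>\varepsilon$---whereas you route the same estimate through distances to coordinate axes and the triangle inequality, arriving at the threshold $\varepsilon\geq\tfrac17$; but the content is identical.
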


\begin{proof}
Suppose $\bz\in V_{1}\cap V_{2}$. Then there exists $
\bz'\in V_{1}$ such that $
\bz=P\bz'$, since $\bz\in V_{2}$. So there exists $
i\neq j\in \{1,2,3\}$ such that $\left| z_{i}^{\prime }\right| ^{2}+\left|
z_{j}^{\prime }\right| ^{2}\leq \varepsilon $, since $\bz'\in V_{1}$. Let $
\{k\}=\{1,2,3\}-\{i,j\}$. Then for any $q$ in $\{1,2,3\}$ we have $\left|
z_{q}\right| ^{2}\geq \frac{1}{3}(\left| z_{k}^{\prime }\right| ^{2}-\left|
z_{i}^{\prime }\right| ^{2}-\left| z_{j}^{\prime }\right| ^{2})\geq
 \frac{1}{3}-\varepsilon $. Therefore any sum $|z_q|^2 + |z_r|^2 \geq \frac{2}{3} -2\varepsilon > \varepsilon$, in contradiction to the condition $\bz \in V_1$.
\end{proof}

\begin{lemma} The inclusions 
$t_{i}\colon V_{i}\to Y$ give $\wP$-equivariant subspaces of $Y$.
\end{lemma}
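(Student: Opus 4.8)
The plan is to check directly that each $V_i\subseteq Y$ is carried onto itself by every $\varphi(g)$, $g\in\wP$, for the action $g\cdot\bz=\varphi(g)\bz$. Since $\wP$ is generated by $a$, $b$ and the subgroup $S^1$, and since $\varphi(z)=zI$ acts on $Y$ by scalar multiplication — which leaves $|z_2|^2+|z_3|^2$ unchanged — it suffices to treat $\varphi(a)$ and $\varphi(b)$; and because $\varphi(a)^3=\varphi(b)^3=I$, an inclusion $\varphi(a)V_i\subseteq V_i$ automatically upgrades to $\varphi(a)V_i=V_i$, and likewise for $b$, so inverses need no separate argument.

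I would begin with $V_1$. Setting $W=\{\bz\in S^5 : |z_2|^2+|z_3|^2\leq\varepsilon\}$ we have $V_1=W\cup\varphi(a)W\cup\varphi(a)^2W$, so invariance under $\varphi(a)$ is immediate from $\varphi(a)^3=I$. For $\varphi(b)$, the one substantive point is that $\varphi(b)\varphi(a)^k=\varphi(a)^kD_k$ with $D_k=\varphi(a)^{-k}\varphi(b)\varphi(a)^k$; since $\varphi(b)=\operatorname{diag}(1,\omega,\omega^2)$ and $\varphi(a)$ is a permutation matrix, $D_k$ is again diagonal with all diagonal entries of modulus $1$, and such a matrix preserves $W$. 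Hence $\varphi(b)\varphi(a)^kW=\varphi(a)^kD_kW\subseteq\varphi(a)^kW\subseteq V_1$, giving $\varphi(b)V_1\subseteq V_1$; combined with the reductions above, $V_1$ is $\wP$-invariant.

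Next I would obtain the case of $V_2=PV_1$ by transporting this through $P$. From the relations $P\varphi(a)P^{-1}=\varphi(a)$ and $P\varphi(b)P^{-1}=\varphi(a^2b)$ recorded before the lemma — together with $P\varphi(z)P^{-1}=\varphi(z)$, which is automatic since $\varphi(z)$ is a scalar — the matrix $P$ normalizes the subgroup $\varphi(\wP)\subseteq U(3)$; in particular $P^{-1}\varphi(g)P\in\varphi(\wP)$ for every $g\in\wP$ (for instance $P^{-1}\varphi(b)P=\varphi(ab)$). Therefore $\varphi(g)V_2=\varphi(g)PV_1=P\bigl(P^{-1}\varphi(g)P\bigr)V_1=PV_1=V_2$, using the $\wP$-invariance of $V_1$. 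Finally, $V_1\cup V_2$ is $\wP$-invariant, hence so is its complement in $Y$, and since $\wP$ acts by homeomorphisms the closure $V_0$ of that complement is $\wP$-invariant too.

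I do not anticipate any real obstacle: the lemma is essentially bookkeeping around the single elementary fact that conjugating the diagonal unitary $\varphi(b)$ by powers of the permutation matrix $\varphi(a)$ again yields diagonal unitaries, which therefore fix the radial region $W$ cut out by $|z_2|^2+|z_3|^2\leq\varepsilon$.
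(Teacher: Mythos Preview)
Your proof is correct and follows essentially the same approach as the paper's: both check the generators $a$, $b$, and $S^1$ directly, and the key step in each is that conjugating the diagonal matrix $\varphi(b)$ by powers of the permutation matrix $\varphi(a)$ again yields diagonal unitaries preserving the region $W=\{|z_2|^2+|z_3|^2\leq\varepsilon\}$. The only organizational difference is your treatment of $V_2$: the paper derives a single commutation formula $\varphi(b)P^{i-1}\varphi(a^k)=P^{i-1}\varphi(a^{k+i-1})\varphi(b)\varphi(\omega^{-k})$ and applies it uniformly to $i=1,2$, whereas you instead observe that $P$ normalizes $\varphi(\wP)$ and transport the invariance of $V_1$ through $P$ --- a slightly cleaner reduction, though the paper's explicit formula is reused later in the proof of the gluing isomorphism.
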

\begin{proof}
Assume $1\leq i\leq 2$. Take any $\bw $ in $V_{i}$, there exists unique $k\in \{0,1,2\}$ and $\bz$ in $V_1$ with $\left| z_{2}\right| ^{2}+\left| z_{3}\right|^{2} \leq \varepsilon $ such that
$$\bw = P^{i-1}\varphi (a^{k})\bz\ .$$
Hence $\varphi (a)\bw = P^{i-1}\varphi (a^{k+1})\bz$ is in $V_{i}$ and for $\lambda \in S^1$, 
$\varphi (\lambda) \bw = P^{i-1}\varphi (a^{k})\varphi (\lambda)\bz$ is in $V_{i}$ as $\left|\lambda z_{2}\right| ^{2}+\left|\lambda z_{3}\right|^{2} \leq \varepsilon $. We have 
\eqncount
\begin{equation}\label{formula}
\varphi (b) P^{i-1}\varphi (a^{k})= P^{i-1}\varphi (a^{-2(i-1)})\varphi (b)\varphi (a^{k})=P^{i-1}\varphi (a^{k+i-1})\varphi (b)\varphi(\omega^{-k})
\end{equation}
Hence for $i=1$, $\varphi (b)\bw = \varphi (a^{k})\varphi (b)\varphi (\omega^{-k})\bz$ is in $V_{i}$ as $\left|\omega ^{-k+1} z_{2}\right| ^{2}+\left|\omega ^{-k+2} z_{3}\right|^{2} \leq \varepsilon $. 
For $i=2$, $\varphi (b)\bw = P\varphi (a^{k+1})\varphi (b)\varphi (\omega^{-k})\bz$ is in $V_{i}$ as above. Hence the lemma is proved for $i=1$ and $i=2$. For $i=0$, it follows from the definition of $V_0$.
\end{proof}
\begin{remark} Observe that each of the subspaces $V_1$ or $V_2$ is  diffeomorphic to the disjoint union of three copies of $S^1 \times D^4$, since the subset $\{ \bz \in S^5 :  |z_2|^2 + |z_3|^2 \leq \varepsilon\} = S^1 \times D^4$.
\end{remark}
Now define a subpace $U_{i}\subset X_{i}$ for $i=0$, $1$, or $2$ by the
following $\wP$-equivariant pulback diagram:
$$\xymatrix@C+2pt{U_{i}\ar[r]\ar[d] &  X_{i}\ar[d]^{p_{i}}\cr
V_{i}\ar[r]^{t_{i}}&Y
}$$
\begin{lemma}
The $\wP$--action on $U_{i}$ is free for $i\in \{0,1,2\}$.
\end{lemma}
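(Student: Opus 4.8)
The plan is to check directly that no $g \in \wP \setminus \{1\}$ fixes a point of $U_i = p_i^{-1}(V_i)$. A point of $U_i$ is a pair $(\bz,\bw)$ with $\bz \in V_i$, and $g$ fixes it iff $\varphi(g)\bz = \bz$ and $\psi_i(g)\bw = \bw$, so it suffices to show these two conditions are incompatible whenever $g \neq 1$ and $\bz \in V_i$. Since $\varphi|_{S^1}$ consists of the scalar matrices $gI$, any $g \in S^1\setminus\{1\}$ already fails the first; so assume $g \notin S^1$, write $g = a^ib^j\lambda$ with $(i,j) \neq (0,0)$ in $\cy 3 \times \cy 3 = \wP/S^1$ and $\lambda \in S^1$, and recall that $\psi_i$ factors through $\wP/S^1$, so $\psi_i(g)$ depends only on $\bar g = \bar a^i\bar b^j$.

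First I would pin down the fixed circles of $\varphi$ on $Y$. Here $\varphi(g) = \lambda\,\varphi(a)^i\varphi(b)^j$ is $\lambda$ times a monomial matrix and is not scalar (as $g \notin S^1$); moreover $\varphi(g)^3$ is scalar because $g^3 \in S^1$, so if $\varphi(g)$ has the eigenvalue $1$ then $\varphi(g)^3 = I$ and the three eigenvalues lie in $\{1,\omega,\omega^2\}$. When $i = 0$ the matrix is diagonal with distinct entries; when $i \neq 0$ it has trace $0$, so its eigenvalues are exactly $1,\omega,\omega^2$. In both cases the eigenvalue $1$ is simple, so the fixed set of $\varphi(g)$ on $Y$ is empty or the unit circle of a single complex line. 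Consequently, if $\varphi(g)\bz = \bz$ then $\bz$ is an eigenvector of $\varphi(\bar g) = \varphi(a)^i\varphi(b)^j$, so $\bC\bz$ is one of the (at most three) eigenlines of $\varphi(\bar g)$; and if also $\bz \in V_i$, that eigenline meets $V_i$.

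Next I would list, for each order-$3$ subgroup $H \le \cy 3 \times \cy 3$, the $\varphi$-eigenlines of a generator of $H$ and locate their circles. For $H = \langle\bar b\rangle$ these are the coordinate lines $\bC e_1,\bC e_2,\bC e_3$, whose circles lie in the interior of $V_1$. For every other $H$ the eigenlines are ``diagonal'': the eigenvector equation for a $3$-cycle monomial matrix forces $|z_1| = |z_2| = |z_3|$, so each point $\bz$ of such a circle has $|z_q|^2 + |z_r|^2 = 2/3 > \varepsilon$ for all $q \neq r$ --- a property preserved by the coordinate permutations $\varphi(a)^k$ --- hence no diagonal circle meets $V_1$. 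Since $P$ normalizes $\varphi(\wP)$ (one has $P\varphi(a)P^{-1} = \varphi(a)$, $P\varphi(b)P^{-1} = \varphi(a^2b)$), multiplication by $P^{-1}$ sends the eigenlines of $\varphi(\bar g)$ to those of $\varphi(\theta(\bar g))$ for the induced automorphism $\theta$ of $\cy 3 \times \cy 3$ with $\theta(\bar a) = \bar a$ and $\theta(\bar b) = \bar a\bar b$; as $V_2 = PV_1$, a diagonal circle meets $V_2$ precisely when its $P^{-1}$-image is a coordinate circle, i.e. when $\theta(H) = \langle\bar b\rangle$, i.e. $H = \langle\bar a^2\bar b\rangle$, and then it lies in the interior of $V_2$. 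The eigenlines for the remaining subgroups $H = \langle\bar a\rangle$ and $H = \langle\bar a\bar b\rangle$ avoid $V_1 \cup V_2$, so their circles lie in $V_0 = \overline{Y - V_1 \cup V_2}$.

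Finally, for each $H$ whose eigenlines meet $V_i$ I would check on the explicit matrices that $\psi_i$ has no eigenvalue $1$ on the nontrivial elements of $H$. Since $\psi_i(a) = \omega I$ for $i = 0,1,2$, the subgroup $\langle\bar a\rangle$ never obstructs; $\psi_1(b) = \mathrm{diag}(\omega,\omega^2,\omega^2)$ settles $\langle\bar b\rangle$ for $V_1$; $\psi_2(a^2b) = \mathrm{diag}(\omega,\omega^2,\omega^2)$ settles $\langle\bar a^2\bar b\rangle$ for $V_2$; and $\psi_0(ab) = \mathrm{diag}(\omega^2,\omega^2,\omega)$ settles $\langle\bar a\bar b\rangle$ for $V_0$ --- in each case the generator, its square, and their $S^1$-multiples all have eigenvalues in $\{\omega,\omega^2\}$. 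Combining: if some $g \neq 1$ fixed a point $(\bz,\bw) \in U_i$, then $g \notin S^1$, the line $\bC\bz$ would be a $\varphi(\bar g)$-eigenline meeting $V_i$, forcing $\bar g$ into one of the subgroups just treated, so $\psi_i(g)\bw = \bw$ would be impossible --- a contradiction. I expect the one real task to be exactly this bookkeeping: verifying that the directions on which a given $\psi_i$ acts with fixed points are precisely the eigenlines that the choice of $V_i$, and in particular the twist by $P$ separating $V_2$ from $V_1$, has removed from $V_i$.
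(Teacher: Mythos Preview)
Your proposal is correct and follows essentially the same approach as the paper's proof: both arguments partition the nontrivial cosets of $S^1$ in $\wP$ according to which order-$3$ subgroup of $\wP/S^1$ they project to, locate the $\varphi$-fixed circles of each such subgroup among the pieces $V_0,V_1,V_2$, and then verify that $\psi_i$ has no eigenvalue $1$ on precisely those subgroups whose circles land in $V_i$. The paper does this tersely via the sets $A_1=\{b^k z\}$ and $A_2=\{a^k b^{-k} z\}$ (your $\langle\bar b\rangle$ and $\langle\bar a^2\bar b\rangle$), while you spell out the eigenline bookkeeping and the role of the automorphism induced by $P$; the content is the same.
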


\begin{proof}
Take two subsets of $\wP$ as follows: 
\begin{eqnarray*}
A_{1} &=&\left\{ b^{k}z  \vv 1\leq k \leq 2, z  \in S^{1}\right\} \\
A_{2} &=&\left\{ a^{k}b^{-k}z  \vv 1\leq k \leq 2, z  \in S^{1}  \right\}
\end{eqnarray*}
All elements of $\wP$ except $A_{1}\cup A_{2}$ act freely on $X_{0}$. But all
the fixed point sets of elements of $A_{i}$ are in $p_{0}^{-1}(V_{i}-
\partial V_{i})$ for $i\in \{1,2\}$. Hence $\wP$ acts freely on $U_{0}$.
Now for any $i\in \{1,2\}$, all elements of $\wP$ except $A_{i}$ act freely
on $V_{i}$, but all the elements of $A_{i}$ act freely on $X_{i}$. Hence $\wP$
acts freely on $U_{i}$.
\end{proof}
\begin{remark} Since $U_i$ is an $S^5$-bundle over $V_i$,  the subspace $U = U_1\, \cup \,U_2$ is diffeomorphic to a disjoint union of six copies of $ S^1 \times D^4 \times S^5$. 
\end{remark}

\begin{lemma}\label{isom} There is a $\wP$-equivariant isomorphism $\alpha\colon \bd U_0 \to \bd U_1\, \cup\, \bd U_2$
as $\wP$-equivariant $5$-dimensional sphere bundles over $
\partial V_{0}=\partial V_{1}\cup \partial V_{2}$ with structure group $U(3)$.
\end{lemma}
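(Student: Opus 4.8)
The plan is to exploit the freeness of the action. By the preceding lemma the elements of $\wP$ with nonempty fixed set on $X_0$ all lie in $A_1\cup A_2$, and their fixed sets meet $p_0^{-1}(V_i)$ only inside $p_0^{-1}(V_i-\bd V_i)$; since $\bd V_0=\bd V_1\sqcup\bd V_2$ and $V_1\cap V_2=\emptyset$, it follows that $\wP$ acts freely on $\bd U_0$, and likewise (again by that lemma) on $\bd U_1$ and $\bd U_2$. Consequently the equivariant $S^5$-bundles in question — the unit sphere bundles of the equivariant $\bC^3$-bundles $V_i\times\bC^3$ with $\wP$ acting by $(\varphi,\psi_i)$, restricted over $\bd V_i$ — descend to genuine $U(3)$-sphere bundles over the quotients, and $\bd V_0/\wP=\bd V_1/\wP\sqcup\bd V_2/\wP$ is a closed $3$-manifold. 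A $\wP$-equivariant bundle isomorphism over $\bd V_i$ is the same datum as an isomorphism of the descended bundles over $\bd V_i/\wP$ (pull back along the principal $\wP$-bundle $\bd V_i\to\bd V_i/\wP$). So it is enough to treat $\bd V_1$ and $\bd V_2$ separately.

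Over $L_1:=\bd V_1/\wP$ the descent of $\bd U_0|_{\bd V_1}$ is the unit sphere bundle $S(E_0)$ of the rank-$3$ complex bundle $E_0=\bd V_1\times_\wP\bC^3_{\psi_0}$, and the descent of $\bd U_1$ is $S(E_1)$ with $E_1=\bd V_1\times_\wP\bC^3_{\psi_1}$. The base $L_1$ is a closed $3$-manifold — in fact a lens space, since $\bd V_1$ is three copies of $S^1\times S^3$, and dividing out the free circle and then $\la a\ra$ leaves an $S^3$ with the free $\cy 3$-action $(z_2,z_3)\mapsto(\omega z_2,\omega^2z_3)$. Over a connected complex of dimension at most $3$, two complex vector bundles of the same rank $\geq 2$ are isomorphic exactly when they have the same first Chern class (the only nonvanishing relevant homotopy group is $\pi_2(BU(3))=\bZ$, detected by $c_1$). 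Now $c_1(E_j)=c_1(\det E_j)$, and $\det E_j$ is the line bundle associated to $\bd V_1\to L_1$ via $\det\circ\psi_j\colon\wP\to U(1)$; a glance at the generators shows $\det\psi_0=\det\psi_1=\det\psi_2$ (each sends $a,z$ to $1$ and $b$ to $\omega^2$). Hence $c_1(E_0)=c_1(E_1)$, so $E_0\cong E_1$, and the induced isomorphism of unit sphere bundles has structure group $U(3)$; pulling it back up gives the required $\wP$-equivariant isomorphism over $\bd V_1$.

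The argument over $\bd V_2$ is identical: because $V_2=PV_1$ with $P\varphi(a)P^{-1}=\varphi(a)$ and $P\varphi(b)P^{-1}=\varphi(a^2b)$, the map $\bz\mapsto P\bz$ carries $\bd V_2$ with its $\wP$-action to $\bd V_1$ with the action twisted by the automorphism $a\mapsto a$, $b\mapsto a^2b$, $z\mapsto z$; so $\bd V_2/\wP$ is again a closed $3$-manifold, and since $\det\psi_0=\det\psi_2$ — an equality unaffected by precomposition with that automorphism — the bundles $E_0|_{\bd V_2}$ and $E_2$ have the same first Chern class and hence are isomorphic. Combining the isomorphisms found over $\bd V_1$ and over $\bd V_2$ produces $\alpha\colon\bd U_0\to\bd U_1\cup\bd U_2$.

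The crux is the bundle-theoretic input: one must verify that the action really is free over all of $\bd V_0$ so that descent is legitimate, that the quotients are genuinely $3$-dimensional so that $c_1$ is a complete invariant for rank-$3$ bundles, and that the determinant characters of $\psi_0$, $\psi_1$, $\psi_2$ coincide. Alternatively — and more in the explicit spirit of this section — one can bypass the classification theorem and instead write $\alpha$ down as a $U(3)$-valued function on a fundamental domain for $\bd V_1$ and for $\bd V_2$, checking the relations $\alpha(g\cdot x)\,\psi_0(g)=\psi_i(g)\,\alpha(x)$ for $g=a,b,z$; since $\psi_0$ and $\psi_1$ are not conjugate no constant matrix works, so such a function must vary over the base, and the computation above is precisely what guarantees it can be chosen to do so coherently.
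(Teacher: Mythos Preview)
Your argument is correct and genuinely different from the paper's. The paper proceeds by brute force: it writes down an explicit bundle map
\[
\alpha(P^{m-1}\varphi(a^k)\bz,\bw)=(P^{m-1}\varphi(a^k)\bz,\Theta_m(\bz)\bw)
\]
with concrete matrices $\Theta_1(\bz),\Theta_2(\bz)\in SU(3)$ depending on the fibre coordinate, and then checks the equivariance relations $\alpha(g\cdot-)=g\cdot\alpha(-)$ for $g=a,b,\lambda$ by a page of matrix identities. Your route is the conceptual one: pass to the quotient $3$--manifold, invoke the classification of complex vector bundles over a $3$--complex by $c_1$, and observe that $\det\psi_0=\det\psi_1=\det\psi_2$ as characters of $\wP$. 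This explains \emph{why} the explicit $\Theta_m$ can exist at all, and it is exactly the ``alternative'' you allude to in your final paragraph --- the paper simply chose to exhibit the answer rather than argue for its existence.

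One point of exposition to tighten. In your first paragraph you deduce that $\wP$ acts freely on $\partial U_0$ and then say ``consequently'' the bundles descend to $\partial V_i/\wP$. Freeness on the total space is not what licenses the descent; you need freeness of the $\varphi$--action on the base $\partial V_i$. This does hold --- the analysis in the preceding lemma actually shows that for $g\notin A_1\cup A_2$ the map $\varphi(g)$ itself is fixed--point free on $Y$ (since $\psi_0(g)$, being diagonal, always has fixed points), while for $g\in A_i$ the fixed set of $\varphi(g)$ lies in the interior $V_i\setminus\partial V_i$ --- but you should say so directly rather than routing through $\partial U_0$. You flag this yourself in the ``crux'' paragraph, so the logic is there; just move it forward.

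The trade--off: the paper's explicit $\Theta_m$ make the glued manifold $X$ completely concrete, which is in keeping with the hands--on spirit of the section, whereas your obstruction--theory argument is shorter and more robust but nonconstructive.
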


\begin{proof}
For $m=1$ and $2$ we have:
$$\partial V_{m}=\{ P^{m-1}\varphi (a^{k})\bz \in Y\vv 0\leq k \leq 2, 
|z_2|^2 + |z_3|^2 = \varepsilon\}, $$
and 
$
\partial V_{0}=\partial V_{1}\cup \partial V_{2}
$.
This means that there is a unique way to write every element of $\partial U_{0}$
in the following standard form
$$( P^{m-1}\varphi (a^{k})\bz, \bw)$$
where $m\in \{1,2\}$, $k\in \{0,1,2\}$, and $\left| z_{2}\right| ^{2}+\left| z_{3}\right|
^{2}=\varepsilon $.
In addition,
$
\partial U_{n}=\partial V_{n}\times S^5
$, for $n=0$, $1$, and $2$, with $\wP$-action given by $g\cdot (\bz, \bw) = (\varphi(g)\bz, \psi_i(g)\bw)$.
We define an isomorphism 
$$
\alpha \colon \partial U_{0}\to \partial U_{1}\cup \partial U_{2}
$$
given by
$$
\alpha ( P^{m-1}\varphi (a^{k})\bz, \bw
) =\left( P^{m-1}\varphi (a^{k})\bz, \Theta_m(\bz)\bw
\right ),
$$
where
$$\Theta_1(\bz) = \frac{1}{\sqrt{\varepsilon (1-\varepsilon )}}\left [ \vcenter{\xymatrix@R=0pt@C=-2pt
{1&0&0\cr
0&\hsp{-}\bar z_1 z_2& -\bar{z}_1 z_3\cr
0 & \hsp{-} z_1 \bar{z}_3 & \hsp{-}z_1 \bar z_2
}} \right] \ \in \ SU(3)
$$
\medskip
$$\Theta_2(\bz) =\frac{1}{\sqrt{\varepsilon (1-\varepsilon )}}\left [ \vcenter{\xymatrix@R=0pt@C=-2pt
{\bar z_1 z_2& -z_1\bar z_3&0\cr
\bar z_1 z_3 & \hsp{-}z_1 \bar z_2& 0\cr
\hsp{-}0 &\hsp{-}0 & 1
}} \right] \ \in \ SU(3)
$$
Now it is clear that $\alpha $ is an isomorphism. We just have to check that
it is $\wP$-equivariant.

\medskip
\noindent
First,  check that $\alpha$ is equivariant under $a$:

\medskip
\noindent
\quad $\alpha \left ( a\cdot\left( P^{m-1}\varphi (a^{k})
\bz,\bw\right)\right ) =\alpha  \left( \varphi (a)P^{m-1}\varphi
(a^{k})\bz ,\psi _{0}(a)\bw\right)  $

\medskip
\noindent
$\hphantom{(\ast)}=\alpha  \left( P^{m-1}\varphi (a^{k+1})\bz ,\psi _{0}(a)\bw\right)  =\left( P^{m-1}\varphi (a^{k+1})\bz ,\Theta_{m}(\bz)\psi _{0}(a)\bw\right) $

\medskip
\noindent
$\hphantom{(\ast)}=\left( \varphi (a)P^{m-1}\varphi (a^{k})\bz ,\psi _{m}(a)\Theta_{m}(\bz)\bw \right) =a\cdot\alpha \left( P^{m-1}\varphi (a^{k})
\bz ,\bw \right) $

\medskip
\noindent
Second, check that $\alpha$ is equivariant under $b$:

\medskip
\noindent
\quad $\alpha \left( b\cdot\left( P^{m-1}\varphi (a^{k})\bz ,\bw\right) \right) 
=\alpha  \left( \varphi (b)P^{m-1}\varphi(a^{k})\bz ,\psi _{0}(b)\bw\right)  $

\medskip
\noindent
$\hphantom{(\ast)}=\alpha  \left( P^{m-1}\varphi (a^{k+m-1})\varphi (b)\varphi
(\omega ^{-k})\bz ,\psi _{0}(b)\bw\right) $, by formula  (\ref{formula}),

\medskip
\noindent
$\hphantom{(\ast)}=\alpha  \left( P^{m-1}\varphi (a^{k+m-1})\left[ 
\begin{array}{c}
\hsp{+1}\omega ^{-k}z_{1} \\ 
\omega ^{-k+1}z_{2} \\ 
\omega ^{-k+2}z_{3}
\end{array}
\right] ,\psi _{0}(b)\bw\right)  =(\star )$

\medskip
\noindent
For $m=1$ we have

\medskip
\noindent
$(\star )=\left( \varphi (a^{k})\left[ 
\begin{array}{c}
\hsp{+1}\omega ^{-k}z_{1} \\ 
\omega ^{-k+1}z_{2} \\ 
\omega ^{-k+2}z_{3}
\end{array}
\right] ,\frac{1}{\sqrt{\varepsilon (1-\varepsilon )}}\left[ 
\begin{array}{ccc}
1 & 0 & 0 \\ 
0 & \hsp{-}\bar{z}_{1}\omega z_{2} & -\bar{z}_{1}\omega ^2 z_{3} \\ 
0 & z_{1} \omega  \bar{z}_{3} & z_{1}\omega ^{2}\bar{z}_{2}
\end{array}
\right] \psi _{0}(b)\bw\right) $

\medskip
\noindent
$\hphantom{(\ast)}=\left( \varphi (b)\varphi (a^{k})\bz ,\Theta_{1}(\bz)\left[ 
\begin{array}{ccc}
1 & 0 & 0 \\ 
0 & \omega  & 0 \\ 
0 & 0 & \omega ^{2}
\end{array}
\right] \psi _{0}(b)\bw\right) =\left( \varphi (b)\varphi (a^{k})\bz ,\Theta_{1}(\bz)\psi _{1}(b)\bw\right) $

\medskip
\noindent
$\hphantom{(\ast)}=\left( \varphi (b)\varphi (a^{k})\bz ,\psi _{1}(b)\Theta_{1}(\bz)\bw \right) =b\cdot\alpha \left( \varphi (a^{k})\bz ,\bw\right) $

\medskip
\noindent
For $m=2$ we have

\medskip
\noindent
$(\star )=\left( P\varphi (a^{k+1})\left[ 
\begin{array}{c}
\hsp{+1}\omega ^{-k}z_{1} \\ 
\omega ^{-k+1}z_{2} \\ 
\omega ^{-k+2}z_{3}
\end{array}
\right] ,\frac{1}{\sqrt{\varepsilon (1-\varepsilon )}}\left[ 
\begin{array}{ccc}
\bar{z}_{1}\omega z_{2} & - z_{1}\omega \bar{z}_{3} & 0 \\ 
\bar{z}_{1}\omega ^{2}z_{3} & \hsp{-} z_{1}\omega ^{2}\bar{z}_{2} & 0 \\ 
0 & 0 & 1
\end{array}
\right] \psi _{0}(b)\bw\right)$ 

\medskip
\noindent
$\hphantom{(\ast)}=\left( \varphi (b) P \varphi (a^{k})\bz ,\left[ 
\begin{array}{ccc}
\omega  & 0 & 0 \\ 
0 & \omega ^{2} & 0 \\ 
0 & 0 & 1
\end{array}
\right] \Theta_{2}(\bz)\psi _{0}(b)\bw\right)$

\medskip
\noindent
$\hphantom{(\ast)}=\left( \varphi (b)P\varphi (a^{k})\bz ,\left[ 
\begin{array}{ccc}
\omega  & 0 & 0 \\ 
0 & \omega ^{2} & 0 \\ 
0 & 0 & 1
\end{array}
\right] \psi _{0}(b)\Theta_{2}(\bz)\bw \right)$

\medskip
\noindent
$\hphantom{(\ast)}=\left( \varphi (b)P\varphi (a^{k})\bz ,\psi _{2}(b)\Theta_{2}(\bz)\bw\right) =b\cdot\alpha\left( P\varphi (a^{k})\bz ,\bw\right) $

\medskip
\noindent
Third, check that $\alpha$ is equivariant under $\lambda \in S^1$:

\medskip
\noindent
\quad $\alpha \left( \lambda\cdot\left( P^{m-1}\varphi (a^{k})\bz ,\bw\right) \right) =\alpha \left( \varphi (\lambda)P^{m-1}\varphi
(a^{k})\bz ,\psi _{0}(\lambda)\bw\right) $

\medskip
\noindent
$\hphantom{(\ast)}=\alpha  \left( P^{m-1}\varphi (a^{k})\lambda\bz
,\bw\right)  =\left( P^{m-1}\varphi (a^{k})\lambda\bz,\Theta_{m}(\bz)\bw\right) $

\medskip
\noindent
$\hphantom{(\ast)}=\left( \varphi (\lambda)P^{m-1}\varphi (a^{k})\bz ,\psi _{m}(\lambda)\Theta_{m}(\bz)\bw\right) =\lambda\cdot\alpha \left( P^{m-1}\varphi (a^{k})
\bz ,\bw\right) $.
\end{proof}

\medskip
\begin{proof}[The proof of Theorem A]
Define a new space $X$ by the following pushout diagram
$$\xymatrix{
\partial U_{0}\cong \partial U_{1}\cup \partial U_{2}\ar[r]\ar[d]
& U_{1}\cup U_{2} \ar[d]\\ 
U_{0}\ar[r] & X
}$$
where the isomorphism $\alpha$ from Lemma \ref{isom} is used to make the identification $\bd U_0\cong \bd U_1\, \cup\, \bd U_2$. 
The above pushout diagram can be considered in the category of $\wP$-equivariant $5$-dimensional sphere bundles with the structure group $U(3)$.
Hence we see that $\wP$ acts freely on $X$ because the action of $\wP$ on $U_{1}\cup U_{2}$
and $U_{0}$ are both free. In addition, 
the base spaces of these bundles is given by the following
pushout diagram
$$\xymatrix{
\partial V_{0}=\partial V_{1}\cup \partial V_{2}\ar[r]\ar[d]& 
V_{1}\cup V_{2}\ar[d] \\ 
V_{0}\ar[r] & Y
}$$
Hence $X$ is a $5$-dimensional sphere bundle over $Y=S^5$
with structure group $U(3)$. But $\pi _{4}(U(3))=0$. Hence $X=S
^{5}\times S^5$.
\end{proof}

\section{Proof of Theorem B}

Let $E$ denote any finite odd order subgroup of the  exceptional Lie group $G_2 $.
To construct a free $E$-action on $S^{11}\times S^{11}$, we start with the 
free $E$-action on $G_2 $ given by left multiplication. Now consider the  principal
fibre bundle
$$S^3 =SU(2) \to G_2  \to G_2 /SU(2) =V_2(\bR^7)$$
with structure group $SU(2)$ over the Stiefel manifold $V_2(\bR^7)$.  This fibre bundle can be identified with the sphere bundle of an associated $2$-dimensional complex vector bundle $\xi$. By construction, the space 
$$Z(\xi) = G_2  \times_{SU(2)} \bC^2 $$ is the total space of the vector bundle $\xi$, where $SU(2)$ acts on $\bC^2$ via the standard representation, and freely off the zero-section.
It follows that the group $G_2$ acts on $Z(\xi)$ through left multiplication, and freely off the zero section.
We therefore obtain a free smooth $G_2$-action on 
the total space $Y$ of the sphere bundle 
$$S^{11} \to Y \to V_2(\bR^7)$$
of the complex vector bundle $\xi\oplus \xi\oplus \xi$. This action can be restricted to any finite subgroup of $G_2$, but the equivariant surgery construction given below to obtain a free action on $S^{11}\times S^{11}$ is valid only for the odd order subgroups $E$ of $G_2$.

\begin{lemma} \label{product_lemma}
$Y$  is a smooth, closed, parallelisable manifold diffeomorphic to $S^{11}\times V_2(\bR^7)$.
\end{lemma}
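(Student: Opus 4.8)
The plan is to exhibit $Y$ as the pullback of a product bundle, so that triviality becomes a statement about the structure group reducing to the identity. Recall that $Y$ is the sphere bundle $S(\xi\oplus\xi\oplus\xi)$ over $V_2(\bR^7)$, where $\xi$ is the $2$-dimensional complex vector bundle associated to the principal $SU(2)$-bundle $G_2\to V_2(\bR^7)$ via the standard representation. The structure group of $\xi\oplus\xi\oplus\xi$ is then the image of $SU(2)\to U(6)$ under the diagonal (thrice-repeated standard) representation $\rho=\sigma\oplus\sigma\oplus\sigma$, and $Y$ is the associated $S^{11}$-bundle $G_2\times_{SU(2)}S^{11}$ with $SU(2)$ acting through $\rho$.

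First I would show the classifying map of this bundle is null-homotopic. The bundle is classified by a map $V_2(\bR^7)\to BSU(2)$; since $V_2(\bR^7)$ is $4$-connected (it is $4$-connected because $V_2(\bR^7)$ has the homotopy type of a sphere bundle $S^5\to V_2(\bR^7)\to S^6$, hence is $4$-connected), and $BSU(2)$ has $\pi_i=0$ for $i\le 3$ with $\pi_4(BSU(2))=\pi_3(SU(2))=\bZ$, the first obstruction lives in $H^4(V_2(\bR^7);\bZ)$. Using the fibration $S^5\to V_2(\bR^7)\to S^6$ one computes $H^4(V_2(\bR^7);\bZ)=0$, and all higher obstruction groups $H^k(V_2(\bR^7);\pi_{k-1}SU(2))$ vanish for dimensional/connectivity reasons through the relevant range. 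Hence the principal $SU(2)$-bundle $G_2\to V_2(\bR^7)$ is classified by a null-homotopic map, so it is trivial as a principal bundle; consequently $\xi$, and therefore $\xi\oplus\xi\oplus\xi$, is trivial, and its sphere bundle $Y$ is diffeomorphic to $S^{11}\times V_2(\bR^7)$. Parallelisability then follows: $V_2(\bR^7)$ is parallelisable (it is a Stiefel manifold, hence a compact Lie-theoretic homogeneous space that is stably parallelisable, and Stiefel manifolds $V_k(\bR^n)$ with $k\ge 2$ are in fact parallelisable), $S^{11}$ is parallelisable, and a product of parallelisable manifolds is parallelisable; closedness and smoothness are clear since both factors are closed smooth manifolds.

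The main obstacle I expect is verifying the triviality of the $SU(2)$-bundle $G_2\to V_2(\bR^7)$ cleanly — i.e., nailing down the connectivity of $V_2(\bR^7)$ and the vanishing of $H^4(V_2(\bR^7);\bZ)$, and confirming there are no further obstructions in higher degrees. An alternative, perhaps cleaner route that sidesteps the obstruction-theory bookkeeping: observe directly that $G_2$ acts transitively on $V_2(\bR^7)$ with stabiliser $SU(2)$, and that in fact $G_2$ acts transitively on the $S^{11}$-sphere bundle $Y$ with stabiliser a subgroup $H$ such that $SU(2)\supseteq H$ acts on the $S^{11}$-fibre; since $S^{11}=S(\bC^6)$ with $SU(2)$ acting through $\sigma\oplus\sigma\oplus\sigma$, which is transitive on $S^{11}$ only after checking orbits — this requires care, so I would fall back on the bundle-triviality argument. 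Either way the final sentence "$Y\cong S^{11}\times V_2(\bR^7)$, closed, smooth, parallelisable" follows, and I would state it with the explicit diffeomorphism coming from a global trivialisation of $\xi$.
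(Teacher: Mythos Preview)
Your main claim---that the principal $SU(2)$-bundle $G_2\to V_2(\bR^7)$, and hence $\xi$, is trivial---is false. The sphere bundle $S(\xi)$ is precisely $G_2$ itself (this is how $\xi$ is defined in the paper). If $\xi$ were trivial you would have $G_2\cong S^3\times V_2(\bR^7)$, but $H^6(G_2;\bZ)=0$ (since $H^*(G_2;\bZ)=\Lambda(x_3,x_{11})$) whereas $H^6(S^3\times V_2(\bR^7);\bZ)\cong\cy 2$ coming from $H_5(V_2(\bR^7);\bZ)=\cy 2$. So $\xi$ is genuinely nontrivial. Your obstruction argument breaks down exactly where you wave your hands: beyond the primary obstruction in $H^4$, the secondary obstructions live in groups such as $H^5(V_2(\bR^7);\pi_5(BSU(2)))=H^5(V_2(\bR^7);\cy 2)\cong\cy 2$ and $H^6(V_2(\bR^7);\pi_6(BSU(2)))=H^6(V_2(\bR^7);\cy 2)\cong\cy 2$, and one of these must be nonzero.

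The paper's argument avoids this trap by proving only that $\xi$ is \emph{stably} trivial: using Borel--Hirzebruch, $\xi\oplus\epsilon^1$ is identified with the vertical tangent bundle of $V_2(\bR^7)\to S^6$, which is stably trivial because $V_2(\bR^7)$ is parallelisable and $TS^6$ is stably trivial. The point is then that $\xi\oplus\xi\oplus\xi$ is a rank-$12$ bundle over an $11$-dimensional base, so stable triviality already forces actual triviality. This is the missing idea in your approach.

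A smaller error: $S^{11}$ is not parallelisable (only $S^1$, $S^3$, $S^7$ are). The parallelisability of $Y$ follows instead from $TS^{11}$ being \emph{stably} trivial together with $V_2(\bR^7)$ being parallelisable, so that $T(S^{11}\times V_2(\bR^7))\cong p_1^*TS^{11}\oplus\epsilon^{11}$ is trivial.
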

\begin{proof}
Consider the fibre bundle
$$SU(3)/SU(2) \to G_2 /SU(2)   \to G_2 /SU(3) $$
which is equivalent to $$S^5  \to V_2(\bR^7)   \to S^6\ .$$
By \cite[Prop.~7.5]{borel-hirzebruch1}, the tangent bundle along the fibers of the total space 
$V_2(\bR^7)$ is equivalent to $\xi $ after adding a trivial line bundle. It is known that the total space $V_2(\bR^7)$ is parallelisable (see 
\cite[Corollary]{bredon-kosinski1}), and the tangent bundle of the base $S^6$ is stably trivial. Therefore $\xi$ is stably trivial over
$V_2(\bR^7)$, which means that the $12$-plane bundle $\xi\oplus \xi\oplus \xi$ is trivial over
$V_2(\bR^7)$ as the dimension of $V_2(\bR^7)$ is $11$. This proves $Y$ is
diffeomorphic to $S^{11} \times V_2(\bR^7)$. We also know that the tangent bundle of $S^{11}$ is stably trivial, hence $Y$ parallelisable.
 \end{proof}

\begin{lemma} \label{homology_lemma}
$Y$ is $4$-connected and has the integral homology of $S^{11}\times
S^{11}$, except for the groups
$H_5(Y;\bZ) = H_{16}(Y; \bZ) = \cy 2$.
\end{lemma}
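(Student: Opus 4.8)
By Lemma \ref{product_lemma} it is enough to compute the integral homology of the Stiefel manifold $V_2(\bR^7)$ and then invoke the K\"unneth formula for $Y\cong S^{11}\times V_2(\bR^7)$. The plan is first to recognise, as in the proof of Lemma \ref{product_lemma}, the bundle
$$S^5\to V_2(\bR^7)\to S^6$$
as the unit tangent sphere bundle of $S^6$: a point of the base is a unit vector $v\in S^6\subset\bR^7$ and the fibre over $v$ is the unit sphere in $v^{\perp}=T_vS^6$. Since the Euler class of $TS^6$ is $\chi(S^6)=2$ times a generator of $H^6(S^6;\bZ)$, I would run the Gysin sequence
$$\cdots\to H^{k-6}(S^6)\xrightarrow{\,\cup e\,}H^k(S^6)\xrightarrow{\,\pi^*\,}H^k(V_2(\bR^7))\to H^{k-5}(S^6)\xrightarrow{\,\cup e\,}H^{k+1}(S^6)\to\cdots$$
with $e=2$. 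The only nontrivial boundary map is multiplication by $2\colon H^0(S^6)\to H^6(S^6)$, and reading off the sequence gives $H^0=\bZ$, $H^6=\bZ/2$, $H^{11}=\bZ$, and all other cohomology groups zero. By the universal coefficient theorem this yields $H_0(V_2(\bR^7);\bZ)=\bZ$, $H_5(V_2(\bR^7);\bZ)=\bZ/2$, $H_{11}(V_2(\bR^7);\bZ)=\bZ$, and $H_i=0$ otherwise; moreover $V_2(\bR^7)$ is $4$-connected, immediately from the homotopy exact sequence of the displayed fibration since $S^5$ is $4$-connected and $S^6$ is $5$-connected. (Alternatively one can simply quote the standard computation of $H_*(V_2(\bR^n);\bZ)$.)

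Next I would feed this into the K\"unneth theorem for $Y\cong S^{11}\times V_2(\bR^7)$. As $H_*(S^{11};\bZ)$ is free, there are no $\mathrm{Tor}$ contributions, so $H_n(Y;\bZ)=\bigoplus_{i+j=n}H_i(S^{11};\bZ)\otimes H_j(V_2(\bR^7);\bZ)$, and since $H_*(S^{11})$ contributes $\bZ$ in degrees $0$ and $11$ only, the nonzero groups are
$$H_0(Y)=\bZ,\quad H_5(Y)=\bZ\otimes\bZ/2=\bZ/2,\quad H_{11}(Y)=\bZ\oplus\bZ=\bZ^2,\quad H_{16}(Y)=\bZ\otimes\bZ/2=\bZ/2,\quad H_{22}(Y)=\bZ.$$
Comparing with $H_*(S^{11}\times S^{11};\bZ)$, which is $\bZ,\bZ^2,\bZ$ in degrees $0,11,22$ and zero elsewhere, shows that $Y$ has the integral homology of $S^{11}\times S^{11}$ except for the extra summands $H_5(Y;\bZ)=H_{16}(Y;\bZ)=\bZ/2$. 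Finally, $Y$ is $4$-connected, being a product of the $4$-connected space $V_2(\bR^7)$ with the $10$-connected space $S^{11}$.

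There is no serious obstacle: the computation is routine once $V_2(\bR^7)$ is identified with the unit tangent bundle of $S^6$. The only points requiring care are pinning down the single nontrivial Gysin boundary map as multiplication by $2$ rather than $0$ (this is exactly what produces the $2$-torsion in $H_5$), and then tracking degrees through the K\"unneth formula so that the torsion ends up in degrees $5$ and $16$.
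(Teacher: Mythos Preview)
Your proof is correct and follows essentially the same approach as the paper: invoke Lemma~\ref{product_lemma} to write $Y\cong S^{11}\times V_2(\bR^7)$, use the known homology and $4$-connectivity of $V_2(\bR^7)$, and apply K\"unneth. The only difference is that the paper simply quotes the integral homology of $V_2(\bR^7)$ as a standard fact, whereas you supply a derivation via the Gysin sequence of the unit tangent sphere bundle of $S^6$; your parenthetical remark that one could just quote the result is exactly what the paper does.
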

\begin{proof}
The proof is easy using Lemma \ref{product_lemma} and the fact that
$V_{2}(\mathbb{R}^{7})$ is $4$-connected, with integral homology given as
follows
$$
H_{q}(V_{2}(\mathbb{R}^{7}))=
\left\{
\vcenter{\xymatrix@R=-3pt{\bZ & \text{if\ } q=0\text{ or }q=11\cr
\hsp{xi}\cy 2 & \text{if \ }q=5\hsp{xxxxxxx}\cr
0 & \text{otherwise}\hsp{xxxxxx}\hfill
}}
\right\}\ .
$$
\vskip -14pt \qedhere
\end{proof}
We will now show how to perform 
$E$-equivariant framed surgery on $Y$ to obtain a free $E$-action on $S^{11}\times S^{11}$. In the successive steps, we remove the interior of an equivariant framed embedding of
$E \times S^k \times D^{22-k}$ and attach $E \times D^{k+1} \times S^{22-k-1}$ along their common boundaries. 

This is an equivariant version of the original spherical modification construction of Milnor \cite{milnor4}, \cite{kervaire-milnor1} which formed the starting point for surgery theory as developed by Browder, Novikov, Sullivan and Wall (see \cite{wall-book}. or the short overview in \cite[\S 7]{h-trieste}). We remark that non-simply connected surgery is carried out equivariantly in the universal covering of a manifold, where the equivariance is with respect to the action of the fundamental group as deck transformations.

In order to carry out $E$-equivariant framed surgery on $Y$,  we will need a partial
equivariant trivialization of the normal bundle of $Y$ to produce the framings. Let $X = Y/E$ and $\nu_X$ be the
classifying map  of the stable normal bundle of $X$. Since $Y$ is $4$-connected by Lemma \ref{homology_lemma},  we can
construct the classifying space $BE$ by adding $k$-cells to $X$ for $k>5$. 
Let $B= BE^{(12)}\cup X$, where $BE^{(12)}$ denotes the $12$-skeleton of $BE$,  . We have a pullback diagram
$$\xymatrix{
\ \ Y\ \ar@{^{(}->}@<-.5ex>[r]\ar[d] & \widetilde{B}\ar[d] \\ 
\ X\ \ar@{^{(}->}@<-.4ex>[r] & B }$$
of universal coverings.
 The assumption that $E$ has odd order will now be used for the first time.
\begin{lemma} \label{bundle_lemma} Since $E$ has odd order,
the normal bundle $\nu_X\colon X\to BSO$  is the restriction of a bundle $\nu \colon B\to BSO$.
\end{lemma}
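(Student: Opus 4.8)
The plan is to extend the stable normal bundle $\nu_X \colon X \to BSO$ over the relative cells of $B = BE^{(12)} \cup X$, that is, over the finitely many cells of $BE$ in dimensions $6$ through $12$ that were attached to $X$. Since $X$ already carries $\nu_X$, the only obstruction to extending a stable bundle from $X$ over a cell of dimension $n$ lies in $H^{n}(B, X; \pi_{n-1}(BSO))$, and the only obstruction to the extension being unique (which we do not actually need) would be one dimension lower. Because $BSO$ has homotopy concentrated in degrees where $\pi_n(BSO)$ is either $0$, $\bZ$, or $\cy 2$ — explicitly $\pi_n(BSO) = \pi_{n-1}(SO)$ is $\cy 2, \cy 2, 0, \bZ, 0, 0, 0, \bZ$ for $n = 1,\dots,8$ and then Bott-periodic — the relevant obstruction groups in the range $6 \leq n \leq 12$ are built from $\pi_{n-1}(BSO) = \pi_{n-1}(SO)$, which for $n-1 \in \{5,\dots,11\}$ takes the values $0, \cy 2, 0, 0, 0, \bZ, \cy 2$ (in degrees $5,6,7,8,9,10,11$ wait — let me restate): $\pi_5(SO)=0$, $\pi_6(SO)=0$, $\pi_7(SO)=\bZ$, $\pi_8(SO)=\cy 2$, $\pi_9(SO)=\cy 2$, $\pi_{10}(SO)=0$, $\pi_{11}(SO)=\bZ$. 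So the obstruction groups are torsion except for contributions from $\pi_7(SO)=\bZ$ and $\pi_{11}(SO)=\bZ$.

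The key point, and where the odd-order hypothesis enters, is that $H^*(B, X; M)$ in this range is computed from the relative cellular chain complex of $(B,X)$, which by construction consists of the cells of $BE$ in degrees $\leq 12$; hence these relative cohomology groups are subquotients of $H^*(BE; M)$ in degrees $\leq 12$ (more precisely they fit into the long exact sequence of the pair, and $H^n(B,X;M) \cong \tilde H^n(BE; M)$ for $n \leq 12$ once one checks the map $X \to B$ induces the skeletal inclusion on low-dimensional cohomology — this follows since $X \hookrightarrow B$ is, up to the given dimension, a model for $X \to BE$ built by attaching cells). Since $E$ is a finite group of odd order, $\tilde H^n(BE; \bZ)$ and $\tilde H^n(BE; \cy 2)$ are finite of odd order — indeed $\cy 2$-coefficient cohomology of an odd-order group vanishes entirely in positive degrees, and the integral cohomology is all odd torsion. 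Therefore the $\pi_7(SO)=\bZ$ and $\pi_{11}(SO)=\bZ$ obstruction groups $H^8(B,X;\bZ)$ and $H^{12}(B,X;\bZ)$ are odd torsion, and the $\cy 2$-valued ones vanish.

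So I would proceed cell-by-cell, in increasing dimension $n = 6, 7, \dots, 12$: at each stage the obstruction to extending the bundle over the $n$-cells lies in $H^n(B,X;\pi_{n-1}(SO))$, which by the previous paragraph is either zero or a finite group of odd order. The obstruction class itself is, however, canonically defined only after choosing the extension over lower cells; to kill an odd-torsion obstruction I use that multiplication by $2$ is invertible on these groups — but more to the point, the obstruction is a single class and I can instead argue directly that any odd-torsion group admits no nonzero image from... actually the cleanest formulation: the obstruction class lies in an odd-torsion (or zero) group, and since we have total freedom to modify choices, one observes that the set of possible obstruction classes (as the lower extension varies) is a coset of the image of a transgression/difference homomorphism; rather than track this, I will instead invoke that $BSO$ localized away from $2$ has the homotopy type relevant here and that $[BE^{(12)}, BSO]$ localized away from $2$ is detected by rational Pontryagin classes, which vanish on a finite group. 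Concretely: the obstruction cocycle represents an element of $H^n(B,X;\pi_{n-1}(SO))$; when this group is zero the extension proceeds automatically, and when it is odd torsion I note the composite $BE^{(12)} \to BSO$ must be null away from $2$ (finite group, so no rational characteristic classes), hence after inverting $2$ the bundle extends, and then a transfer/Sullivan-style argument using that $|E|$ is odd and multiplication by $|E|$ on the obstruction group is invertible promotes this to an honest integral extension over $B$.

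I expect the main obstacle to be the bookkeeping in the last paragraph: making precise that the finitely many obstructions really do live in subquotients of $\tilde H^*(BE)$ in the stated range (this requires care about how $X$ sits inside $B$ and that no new low-dimensional cohomology is introduced by the cells of $X$ beyond what $Y$ being $4$-connected already controls), and then executing the "invert $2$, then clear denominators using $|E|$ odd" step cleanly — the standard trick being that for a spectrum like $BSO$ whose relevant homotopy is finitely generated, a map from a finite complex extends over an $|E|$-odd extension iff it does so rationally-away-from-$2$, because the obstruction groups are then odd torsion and annihilated by a power of $|E|$. The bundle-theoretic facts (obstruction theory for stable bundles, values of $\pi_*(SO)$, $\cy 2$-acyclicity of odd groups) are all standard; the real content is this localization argument, which is exactly the "odd order" input the lemma advertises.
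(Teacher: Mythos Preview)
Your approach has a genuine gap, and it diverges substantially from the paper's argument.

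First, a minor but confusing slip: you write $\pi_{n-1}(BSO)=\pi_{n-1}(SO)$, but of course $\pi_{n-1}(BSO)=\pi_{n-2}(SO)$. With the correct shift, the coefficient groups for $n=6,\dots,12$ are $0,0,0,\bZ,\cy2,\cy2,0$; in particular $\pi_{11}(BSO)=0$, so there is no top obstruction at $n=12$ to worry about at all.

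The real problem is your identification $H^{n}(B,X;M)\cong \tilde H^{n}(BE;M)$. This is false. Collapsing $X$ in $B$ does not yield $BE$; the relative cochain complex of $(B,X)$ is not the cochain complex of $BE$. From the long exact sequence of the pair, $H^{n}(B,X;M)$ is an extension of a subgroup of $H^{n}(B;M)\cong H^{n}(BE;M)$ by the cokernel of $H^{n-1}(B;M)\to H^{n-1}(X;M)$, and that cokernel is governed by the cohomology of $X=Y/E$, not of $BE$. So the claim that these relative groups are subquotients of $H^{*}(BE;M)$ is unjustified, and with it your odd--torsion conclusion collapses. Your subsequent ``invert $2$ then clear denominators'' argument is also not a valid mechanism in obstruction theory: an obstruction class living in an odd--torsion group need not vanish, and there is no transfer available here to force it to.

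The paper does something much sharper: it shows the obstruction \emph{groups} are zero, not merely odd torsion. For $k=6,7,12$ the coefficient $\pi_{k-1}(BSO)$ vanishes. For $8\leq k\leq 11$ one uses Lemma~\ref{homology_lemma}: since $H_q(Y)=0$ for $6\leq q\leq 10$ and the only intervening homology $H_5(Y)=H^{5}(Y)=H^{6}(Y)=\cy2$ contributes nothing to $H^{*}(E;-)$ when $|E|$ is odd, the Serre spectral sequence of $Y\to X\to BE$ shows that $H^{k}(BE;A)\to H^{k}(X;A)$ is an isomorphism for $k=8,9,10$, surjective for $k=7$, and injective for $k=11$. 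Plugging this into the long exact sequence of $(B,X)$ gives $H^{k}(B,X;A)=0$ for $8\leq k\leq 11$ and any $A$. No localization is needed.

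So the missing idea is precisely the input from Lemma~\ref{homology_lemma}: the special shape of $H_{*}(Y)$ is what makes the relative cohomology vanish in the critical range, and the odd--order hypothesis enters only to suppress the $\cy2$ coming from $H_{5}(Y)$.
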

\begin{proof}
The successive obstructions to extending the classifying map
$\nu_X\colon X\to BSO$ of the stable normal bundle of $X$ to a map from
$B$ to $BSO$ lie in the  groups
$$H^{k}(B,X;\pi_{k-1}(BSO))$$
for $k \geq 6$. We claim that these obstructions vanish since $E$ has odd order. For $6 \leq k \leq 7$, we have $\pi_{k-1}(BSO))=0$.
For $8 \leq k \leq 11$,  by considering Lemma \ref{homology_lemma} and the
cohomology long exact sequence of the pair $(B,X)$ with coefficients in
any abelian group $A$, we get $H^{k}(B,X;A)=0$. Finally for $k=12$, we
have $\pi_{11}(BSO) = 0$, so we may extend $\nu_X$ over  $B$.
\end{proof}
Let $B' = BE^{(11)}\cup X \subseteq B$,  and still denote the restriction of $\nu$ to $B'$ by $\nu $.
\begin{lemma} \label{bundle_lemma_two}
The pullback $\tilde \nu $ of $\nu$ by the map $\widetilde{B'}\to B'$ is trivial.
\end{lemma}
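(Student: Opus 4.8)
The plan is to show that $\widetilde{B'}$ is a highly connected complex and that $\tilde\nu$, being a stable bundle over it, is classified by a map into $BSO$ which must be null-homotopic for dimensional reasons. First I would determine the homotopy type of $\widetilde{B'}$. By construction $B' = BE^{(11)}\cup X$ with $X = Y/E$, so its universal cover $\widetilde{B'}$ is obtained from $\widetilde{X} = Y$ by attaching the cells of the universal cover of $BE^{(11)}$ that are not already present; since $Y$ is $4$-connected (Lemma \ref{homology_lemma}) and $\widetilde{BE}$ is contractible, the only cells of $\widetilde{BE}^{(11)}$ that can contribute nontrivially to the low-dimensional homotopy/homology of $\widetilde{B'}$ are in dimensions $\le 11$, and these are attached to kill exactly the homology classes where $Y$ differs from a product of spheres. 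Concretely, $H_5(Y;\bZ)=\cy 2$ is killed by attaching a single $6$-cell (equivariantly, an $E$-orbit of $6$-cells upstairs), so $\widetilde{B'}$ is $5$-connected and its first nonzero reduced homology sits in degree $11$.

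Next I would observe that a stable vector bundle $\tilde\nu$ over the $5$-connected complex $\widetilde{B'}$ is classified by a map $\widetilde{B'}\to BSO$, and the obstructions to null-homotoping this map lie in $H^{k}(\widetilde{B'};\pi_{k-1}(BSO))$ for $k\ge 1$. Since $\widetilde{B'}$ is $5$-connected these groups vanish for $k\le 6$; for $7\le k\le 11$ we use $\pi_{k-1}(BSO)=0$ in the relevant range together with the fact (from the previous paragraph) that $\widetilde{B'}$ has trivial cohomology in those degrees anyway — indeed $\widetilde{B'}$ is built from cells in dimensions $\ge 11$ beyond its $5$-connected bottom, so $H^{k}(\widetilde{B'};A)=0$ for $6\le k\le 10$ for any coefficient group $A$. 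The only potentially troublesome obstruction is in degree $12$, landing in $\pi_{11}(BSO)=0$; but $\pi_{11}(BSO)=0$, so that obstruction vanishes as well, and since $\widetilde{B'}$ has dimension $\le 12$ (it is built from $B'=BE^{(11)}\cup X$ and $X$ is $11$-dimensional, with the attached orbit cells in dimension $\le 11$) there are no further obstructions. Hence the classifying map is null-homotopic and $\tilde\nu$ is trivial.

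An alternative, cleaner route — which I would actually prefer to write up — is to deduce triviality of $\tilde\nu$ directly from Lemma \ref{bundle_lemma}: the bundle $\nu\colon B\to BSO$ restricts to $\nu\colon B'\to BSO$, and its pullback $\tilde\nu$ to $\widetilde{B'}$ agrees with the pullback of $\nu_X\colon X\to BSO$ — i.e. the stable normal bundle of $X$ — along $Y=\widetilde{X}\to X=Y/E$ (extended over the extra cells, which contribute nothing since they are attached in a range where $BSO$ has no homotopy). That pullback is the stable normal bundle of $Y$, and $Y$ is parallelisable by Lemma \ref{product_lemma}, so its stable normal bundle is trivial. The extension over the finitely many extra orbit cells (dimension $\le 11$) is then controlled by $\pi_{10}(BSO)=\pi_{11}(BSO)=0$, so the trivialization persists over all of $\widetilde{B'}$.

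The main obstacle is the bookkeeping in dimension $12$: one must be sure that $\widetilde{B'}$ genuinely has dimension at most $12$ (or that its only cells above the $5$-connected range are controlled), so that the single possibly-nonzero obstruction group $H^{12}(\widetilde{B'};\pi_{11}(BSO))$ is the last one to check — and this is exactly where the vanishing $\pi_{11}(BSO)=0$, already exploited in Lemma \ref{bundle_lemma}, does the work. Everything else is a routine connectivity and obstruction-theory argument, parallel to the proof of Lemma \ref{bundle_lemma}, now applied one dimension lower over the universal cover where parallelisability of $Y$ is available.
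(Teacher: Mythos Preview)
Your ``alternative, cleaner route'' is exactly the paper's strategy: start from the trivialization of $\nu_Y$ (since $Y$ is parallelisable) and extend it over the extra cells of $\widetilde{B'}$, with obstructions in $H^k(\widetilde{B'},Y;\pi_k(BSO))$. But your execution has two real errors.

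First, a dimension mistake that breaks your primary argument entirely: $X=Y/E$ is $22$-dimensional, not $11$-dimensional, because $Y=S^{11}\times V_2(\bR^7)$ has dimension $22$. Hence $\widetilde{B'}$ is $22$-dimensional. So in your absolute obstruction approach there is no reason the obstructions stop at degree $12$; you would face obstructions in $H^k(\widetilde{B'};\pi_{k-1}(BSO))$ all the way up to $k=22$, and several of these coefficient groups are nonzero. This is precisely why the paper works \emph{relatively} to $Y$: the cells of $(\widetilde{B'},Y)$ lie only in dimensions $6$ through $11$, so the relative obstruction problem terminates at $k=11$ even though the ambient complex is $22$-dimensional.

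Second, in the relative argument you assert $\pi_{10}(BSO)=0$, but in fact $\pi_{10}(BSO)\cong\cy 2$ (and $\pi_8(BSO)\cong\bZ$, $\pi_9(BSO)\cong\cy 2$). So you cannot dispose of the obstructions in degrees $8,9,10$ by appealing to the coefficients. The paper instead shows that the \emph{groups} $H^k(\widetilde{B'},Y;A)$ vanish for $8\le k\le 10$ and any $A$: since $\widetilde{B'}$ is $10$-connected one has $H^k(\widetilde{B'};A)=0$ for $k\le 10$, and by Lemma~\ref{homology_lemma} one has $H^{k-1}(Y;A)=0$ for $k-1\in\{7,8,9\}$, so the long exact sequence of the pair gives the vanishing. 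Together with $\pi_6(BSO)=\pi_7(BSO)=0$ and $\pi_{11}(BSO)=0$, this finishes the extension. Your sketch has the right shape but needs these corrections to go through.
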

\begin{proof}
The normal bundle $\nu_{Y}$ of $Y$ is trivial, hence it is enough to extend a null homotopy of the map $\nu_{Y}$ to a null homotpy of $\tilde\nu $.
The successive obstructions for this extension problem lie in the groups
$$H^{k}(\widetilde{B'},Y;\pi_{k}(BSO))$$
for $k \geq 6$. We claim that these obstructions also vanish. For $6 \leq k \leq 7$, we have $\pi_{k}(BSO))=0$.
For $8 \leq k \leq 10$,  by considering Lemma \ref{homology_lemma} and the
cohomology long exact sequence of the pair $(\widetilde{B'},Y)$ with coefficients in
any abelian group $A$, we get $H^{k}(\widetilde{B'},Y;A)=0$. Since $\pi_{11}(BSO) = 0$ we are done.
\end{proof}

Let $\bH(L)$ denote the standard skew-hermitian hyperbolic form on the
module $L\oplus L^*$. The following uses surgery below the middle dimension,  a standard procedure in surgery theory 
(see\cite[\S 5]{kervaire-milnor1}, \cite[Chap.~1]{wall-book}).

\begin{lemma}\label{middle} After preliminary surgeries
on $X$, we can obtain a smooth manifold $M$ with the
following properties:
\begin{enumerate}\addtolength{\itemsep}{0.2\baselineskip}
\item $\widetilde{M}$ is stably parallelisable.
\item The classifying map $c\colon M \to BE$ induces an isomorphism $\pi_1(M) \cong E$.
\item $\pi_i(M) = 0$ for $1 < i < 11$.
\item The intersection form $$(\pi_{11}(M), s_M) \cong \bH(\bZ)\perp
(F,\lambda)$$ for some
non-singular skew hermitian form $\lambda$ on a finitely-generated free
$\ZE$-module $F$. 
\end{enumerate}
\end{lemma}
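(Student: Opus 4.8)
The plan is to produce $M$ from $X$ by $E$-equivariant framed surgery in two stages: surgery below the middle dimension to make the universal covering $10$-connected, and then finitely many trivial surgeries in the middle dimension to put the equivariant intersection form into the asserted normal form. Every surgery is performed on $X$ itself. Because the attaching spheres have dimension $k\le 11<22$ they are simply connected, so they lift to $|E|$ disjoint translates in $\widetilde M=Y$; hence each surgery on $X$ is automatically $E$-equivariant on the universal covering, and, being in dimension $\ge 2$, leaves $\pi_1(X)=E$ unchanged. The bundle data needed to realise these as surgeries on the normal map $X\to B$ --- trivial normal bundles for the attaching spheres and a normal framing compatible with the map to $B$ --- is exactly what Lemmas \ref{bundle_lemma} and \ref{bundle_lemma_two} provide (the latter over $B'=BE^{(11)}\cup X$, covering the relevant range of dimensions). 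Since the trace of a framed surgery is stably parallelisable, properties (1) and (2) are preserved throughout.

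First I would do surgery below the middle dimension. By Lemma \ref{homology_lemma}, $\widetilde M=Y$ is $4$-connected, so $\pi_i(M)=0$ for $1<i\le 4$, and $\pi_5(M)=H_5(Y;\bZ)=\cy 2$ by Hurewicz. Proceeding by induction on the least $k\ge 2$ with $\pi_k(M)\ne 0$ (so $5\le k\le 10$ at each stage), $\pi_k(M)=H_k(\widetilde M;\bZ)$ is a finitely generated $\ZE$-module; I represent a $\ZE$-generating set by embedded framed spheres $S^k\hookrightarrow X$, which may be taken pairwise disjoint since $2k\le 20<22$, and whose normal bundles are trivial because $\widetilde M$ is stably parallelisable and $22-k>k$. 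Surgering these spheres (equivalently, their lifts to $Y$) lowers the first nonzero homotopy degree; this is the classical ``surgery below the middle dimension'' (\cite[\S 5]{kervaire-milnor1}, \cite[Chap.~1]{wall-book}), carried out equivariantly on the universal covering. After finitely many steps $\widetilde M$ is $10$-connected.

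At this point Poincar\'e duality and the universal coefficient theorem concentrate $H_\ast(\widetilde M;\bZ)$ in degrees $0$, $11$, $22$, with $K:=H_{11}(\widetilde M;\bZ)=\pi_{11}(M)$ a finitely generated, $\bZ$-torsion-free $\ZE$-module. As $\widetilde M$ is closed and oriented and $E$ acts freely and orientation-preservingly (recall $|E|$ is odd), equivariant Poincar\'e duality makes the intersection form $s_M$ on $K$ a nonsingular $(-1)$-hermitian, i.e.\ skew-hermitian, form over $\ZE$ (consistent with $\dim M=22\equiv 2\pmod 4$). It remains to normalise $(K,s_M)$. The classes $[S^{11}\times\mathrm{pt}]$ and $[\mathrm{pt}\times V_2(\bR^7)]$ in $H_{11}(Y;\bZ)\cong\bZ^2$ are untouched by the preceding surgeries (all in degrees $\le 10$) and, since $G_2$ is connected, are fixed by $E$; thus they span a trivial $\ZE$-submodule, and a direct computation of the equivariant intersection numbers (each self-intersection vanishes, and the mutual one is $\pm$ the norm element $\sum_{g\in E}g$) identifies $s_M$ on this submodule with the skew-hermitian hyperbolic form $\bH(\bZ)$, which is nonsingular over $\ZE$ and hence splits off orthogonally: $K\cong\bH(\bZ)\perp(F_0,\lambda_0)$. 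The module $F_0$ is stably free over $\ZE$, by the standard fact that the surgery kernel of the normal map on the closed manifold $M$ has vanishing class in $\widetilde K_0(\ZE)$; hence, after adjoining finitely many orthogonal copies of the free hyperbolic form $\bH(\ZE)$ --- each realised by a trivial $E$-equivariant framed surgery (equivariant connected sum with a free $E$-orbit of copies of $S^{11}\times S^{11}$) --- the complement becomes a genuinely free module $F$, and $(\pi_{11}(M),s_M)\cong\bH(\bZ)\perp(F,\lambda)$ with $\lambda$ nonsingular skew-hermitian, as required.

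The main work is in the low-dimensional surgeries and the accompanying bookkeeping: one must verify that killing $\pi_5(M)=\cy 2$, a non-free $\ZE$-module represented by a single generator, and then inductively killing the $\ZE$-modules $\pi_k(M)$ for $6\le k\le 10$, really terminates --- that is, that the obstructions to surgery below the middle dimension vanish in this range --- and one must track the $\ZE$-module structure precisely enough to see both that $\bH(\bZ)$ splits off orthogonally and that its complement is stably free, so that trivial middle-dimensional surgery renders it free. By contrast, the bundle-theoretic input (stable parallelisability of $\widetilde M$, existence of framings, preservation of the normal-map structure) has already been isolated in Lemmas \ref{bundle_lemma} and \ref{bundle_lemma_two}, and the equivariance of the surgeries is automatic because they are carried out downstairs on $X$.
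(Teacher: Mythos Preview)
Your overall strategy matches the paper's: surgery below the middle dimension over $(B',\nu)$, then a stabilisation in the middle. The gap is in your step (4). You assert that the orthogonal complement $F_0$ of the $\bH(\bZ)$ summand is stably free, appealing to ``the standard fact that the surgery kernel of the normal map on the closed manifold $M$ has vanishing class in $\widetilde K_0(\ZE)$''. That fact applies to degree-one normal maps into a Poincar\'e complex; here there is no Poincar\'e target --- you are simply making $M$ highly connected, and $B'$ is not a Poincar\'e complex --- so the cited result does not apply, and you have given no reason why $[F_0]=0$ in $\widetilde K_0(\ZE)$. If $F_0$ is not stably free, adjoining copies of $\bH(\ZE)$ cannot make it free.

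The paper does \emph{not} claim $F_0$ is stably free. Instead it tracks the projective modules explicitly through the low-dimensional surgeries using Swan's results: killing $\pi_5=\cy 2$ produces $\pi_6\cong\langle 2,I\rangle$, and each subsequent surgery replaces the current projective by its stable inverse, so that after killing $\pi_{10}$ one obtains $(\pi_{11}(M),s_M)\cong\bH(\bZ)\perp(P,\lambda')$ with $P\cong\langle 2,N\rangle\oplus\langle 2,N\rangle^*$. The key algebraic input is then Swan's isomorphism $\bZ\oplus\langle r,N\rangle\cong\bZ\oplus\ZE$ \cite[Lemma 6.1]{swan1}: after one trivial surgery (adding $\bH(\ZE)$) one rewrites
\[
\bH(\bZ)\perp\bH(\ZE)=\bH(\bZ\oplus\ZE)\cong\bH(\bZ\oplus\langle r,N\rangle)=\bH(\bZ)\perp\bH(\langle r,N\rangle),
\]
and with $2r\equiv 1\pmod{|E|}$ the relation $\langle 2,N\rangle\oplus\langle r,N\rangle\cong\ZE\oplus\ZE$ makes $F:=\bH(\langle r,N\rangle)\perp(P,\lambda')$ a form on a genuinely free module. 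Note that this manoeuvre uses the $\bH(\bZ)$ summand in an essential way --- it is absorbed and re-emitted via Swan's isomorphism --- which is precisely what your argument lacks.
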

\begin{proof}
Lemma \ref{bundle_lemma} gives a bundle $\nu\colon B' \to BSO$. We will perform a sequence of surgeries over $(B', \nu)$, so that in particular the bundle $\nu$ pulls back to the stable normal bundle of the trace of the surgeries.  By Lemma  \ref{bundle_lemma_two}, the resulting manifold $M$ at any stage of these surgeries has universal covering $\widetilde M$ stably parallelisable.

The first step is surgery to kill a generator of $\pi_5(X)=\cy 2$. 
We use the short exact sequence
$$ 0 \to \la 2, I\ra \to \ZE \to \cy 2\to 0$$
of $\ZE$-modules, where $I$ denotes the augmentation ideal of $\ZE$, to
keep track of the effect of the first step of the $E$-equivariant framed
surgery on $Y$. The result of the first step is a manifold $M$ such that
$\pi_6(M)=\la 2, I\ra$.
We have a short exact sequence
$$0 \to \ZE \to \la 2, N\ra \to \cy 2\to 0,$$
where the module $ \la 2, N\ra$ is projective over $\ZE$ since $E$ has odd order 
(see \cite[\S 6]{swan1}).
Now Schanuel's Lemma shows that  
$$\la 2, N\ra \oplus \la 2, I\ra=\bZ E
\oplus \bZ E$$ is free over $\ZE$, so $\la 2, I\ra$ is a
finitely-generated projective $\ZE$-module with stable inverse $\la 2,
N\ra$. The effect of the subsequent  surgeries  to make $\widetilde M$
highly-connected is just to replace a projective module $\pi_i(M)=Q$ at
each step with its stable inverse $\pi_{i+1}(M')=Q'$, for $i <10$. At the last of these
steps, where we eliminate $\pi_{10}(M)$, the result is an expression
$$(\pi_{11}(M), s_M) \cong \bH(\bZ)\perp  (P, \lambda ')$$
where $(P,\lambda ')$ is a non-singular skew-hermitian form on $P = Q \oplus Q^*$, and $Q \cong\la 2, N\ra$.
The projective
modules $\la r, N\ra$, for $r$ prime to $|E|$, generate the Swan subgroup
$T(\ZE) \subseteq \widetilde K_0(\ZE)$ of the projective class group. Now
Swan \cite[Lemma 6.1]{swan1} proved that
$$\bZ \oplus \langle r, N\rangle \cong \bZ \oplus \bZ E$$
for any $r$ prime to $|E|$, and that 
$$\la 2, N\ra \oplus \la r, N \ra \cong \bZ E \oplus \bZ E$$
if  $2r\equiv 1\ (\Mod  |E|)$. 
After surgery on a null-homotopic $10$-sphere in $M$, we obtain $M'
=M\# (S^{11} \times S^{11})$, whose equivariant intersection form is
$$(\pi_{11}(M'), s_{M'}) \cong \bH(\bZ)\perp (P,\lambda ') \perp
\bH(\ZE)$$
However note that
$$\bH(\bZ)\perp \bH(\ZE) = \bH(\bZ\oplus \ZE) \cong \bH(\bZ\oplus \la
r,N \ra) = \bH(\bZ)\perp \bH(\la r,N \ra)\ .$$
Now $(F,\lambda) :=\bH(\la r,N \ra)\perp (P, \lambda ')$ is
a non-singular skew-hermitian form on a finitely-generated free
$\ZE$-module. 
\end{proof}
We next observe that the equivariant intersection form $(\pi_{11}(M),
s_M)$ has a quadratic refinement $\mu\colon \pi_{11}(M) \to \ZE/\{\nu +
\bar\nu\}$, in the sense of \cite[Theorem 5.2]{wall-book}. Since $E$ has
odd order, this follows because the universal covering $\widetilde M$ has
stably trivial normal bundle. We therefore obtain an element
$(F, \lambda, \mu)$
of the surgery obstruction group (see \cite[p.~49]{wall-book} for the
essential definitions). In the splitting $(\pi_{11}(M), s_{M}, \mu) = H(\bZ) \perp (F, \lambda, \mu)$ we may assume that the Arf invariant of the summand $H(\bZ)$ is zero. This follows by construction, since the preliminary surgeries can be done away from an embedded sphere $$S^{11} \times \ast \ \subset \ S^{11} \times V_2(\bR^7) =Y$$
with trivial normal bundle.
We need to check the discriminant of the form $(F, \lambda, \mu)$.
\begin{lemma}
We obtain an element
$$(F, \lambda, \mu) \in L'_{2}(\ZE)$$
of the weakly-simple surgery obstruction group.
\end{lemma}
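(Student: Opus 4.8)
The plan is to unwind the definition of the weakly-simple surgery obstruction group and verify that the triple $(F,\lambda,\mu)$ constructed above satisfies its requirements; everything is formal except the $K_1$-decoration, which is the ``discriminant'' to be checked. Recall that $L'_2(\ZE)$ is the $L$-group of non-singular skew-hermitian forms $(F,\lambda,\mu)$, where $F$ is a finitely generated free $\ZE$-module with a preferred basis, $\lambda$ is a non-singular skew-hermitian form and $\mu$ a quadratic refinement valued in $\ZE/\{x+\bar x\}$, taken up to addition of standard hyperbolics $\bH(\ZE)$ and up to isometries whose Whitehead torsion lies in the subgroup $SK_1(\ZE)\subseteq\wh(\ZE)$. (We use here that $|E|$ is odd: $SK_1(\ZE)$ is precisely the torsion ambiguity that the construction forces on us.) Equivalently, giving an element of $L'_2(\ZE)$ amounts to giving a quadratic form on a free $\ZE$-module together with a well-defined discriminant in $\wh(\ZE)/SK_1(\ZE)$, with vanishing Arf-type invariant after splitting off a standard hyperbolic.

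Most of the data is already in place. By Lemma~\ref{middle}, $(F,\lambda)$ is a non-singular skew-hermitian form on the free module $F\cong\ZE^{4}$; since $\widetilde M$ has stably trivial normal bundle and $|E|$ is odd, $\lambda$ carries a quadratic refinement $\mu$ in the sense of \cite[Theorem~5.2]{wall-book}; and, because the preliminary surgeries were performed away from the embedded $S^{11}\times\ast\subset Y$, the Arf invariant of the split summand $\bH(\bZ)$ vanishes. So the one remaining point is the discriminant. Here I would write $F=\bH(\la r,N\ra)\perp(P,\lambda')$ with $P=\la 2,N\ra\oplus\la 2,N\ra^{*}$, and use the basis of $F$ induced by Swan's stable isomorphism $\la 2,N\ra\oplus\la r,N\ra\cong\ZE^{2}$ (valid when $2r\equiv 1\ (\Mod |E|)$) together with its $\ZE$-dual. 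In this basis the matrix of $\lambda$ is the block sum of the standard hyperbolic matrix — which has trivial discriminant once $\la r,N\ra\oplus\la 2,N\ra$ has been made free — and the matrix of $(P,\lambda')$; and by \cite[\S 6]{swan1} the stabilizing isomorphisms for $\la r,N\ra$ are unique up to automorphisms with torsion in $SK_1(\ZE)$, so the discriminant of $\lambda$ is well defined modulo $SK_1(\ZE)$. Finally I would check that the auxiliary choices made in Lemma~\ref{middle} — the prime $r$, the order in which $\pi_5,\dots,\pi_{10}$ are eliminated, and the intermediate stabilizations — alter $(F,\lambda,\mu)$ only by a standard hyperbolic $\bH(\ZE)$ and an isometry with torsion in $SK_1(\ZE)$, so the resulting class in $L'_2(\ZE)$ is independent of them.

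The main obstacle is the discriminant step: showing that the $K_1$-indeterminacy of Swan's stable isomorphisms of $\la r,N\ra$ is exactly $SK_1(\ZE)$ and nothing larger. This is the essential use of the odd-order hypothesis — it is what makes the Swan modules $\la r,N\ra$ projective and forces the relevant torsions into $SK_1(\ZE)$ — and it is also why one can only claim an element of the weakly-simple group $L'_2(\ZE)$ rather than of the simple group $L^{s}_2(\ZE)$.
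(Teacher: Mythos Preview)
Your plan conflates two different claims about the discriminant. What you argue (or sketch) is that the discriminant of $(F,\lambda)$, computed in a basis coming from Swan's stable isomorphism $\la 2,N\ra\oplus\la r,N\ra\cong\ZE^{2}$, is \emph{well defined modulo} $SK_1(\ZE)$. But the lemma asks for something stronger: that the discriminant actually \emph{lies in} $SK_1(\ZE)=\ker(\wh(\ZE)\to\wh(\bQ E))$, since that is the decoration defining $L'_2$. Your block-sum computation reduces this to the discriminant of $(P,\lambda')$, and there the argument stops --- $(P,\lambda')$ is not a hyperbolic form you chose, it is the equivariant intersection form produced by the surgeries in Lemma~\ref{middle}, and nothing in Swan's paper bounds its discriminant. (The side claim that the stabilizing isomorphisms in \cite[\S 6]{swan1} have torsion in $SK_1(\ZE)$ is also not something Swan proves, and in any case it would only control the basis-change ambiguity, not the discriminant itself.)

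The missing idea is geometric, not algebraic. The form $(\pi_{11}(M),s_M)$ is the middle-dimensional intersection form of a \emph{closed} manifold, and closed manifolds have simple Poincar\'e duality \cite[Theorem~2.1]{wall-book}. Passing to $\bQ E$-coefficients, the symmetric Poincar\'e chain complex $(C(M)\otimes\bQ,\varphi_0)$ is chain equivalent to its homology complex (cf.~\cite[\S4]{ra10}), so the image of the discriminant of $(\pi_{11}(M)\otimes\bQ,s_M)$ in $\wh(\bQ E)$ equals the torsion of $\varphi_0$, which vanishes. Hence the discriminant of $(F,\lambda)$ maps to zero in $\wh(\bQ E)$, i.e.\ lies in $SK_1(\ZE)$, and $(F,\lambda,\mu)\in L'_2(\ZE)$. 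This is the argument the paper gives; the odd-order hypothesis is used elsewhere (for the quadratic refinement and for projectivity of $\la 2,N\ra$), not for the discriminant step.
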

\begin{proof}
A non-singular, skew-hermitian quadratic form $(F, \lambda, \mu)$
represents an element in
$L'_{2}(\ZE)$ provided that its discriminant lies in $\ker(\wh(\ZE) \to
\wh(\bQ E))$. But the equivariant symmetric Poincar\'e chain complex
$(C(M), \varphi_0)$ is chain equivalent, after tensoring with the
rationals $\bQ$, to its rational homology complex (see \cite[\S 4]{ra10}).
Therefore the image of the discriminant of $(\pi_{11}(M)\otimes \bQ, s_M)$
equals the image of the torsion of $\varphi_0$, which vanishes in $\wh(\bQ
E)$ because closed manifolds have simple Poincar\'e duality (see
\cite[Theorem 2.1]{wall-book}).
\end{proof}

\begin{proof}[The proof of Theorem B]
We now have a smooth closed manifold $[M,c]$ whose equivariant
intersection form $(\pi_{11}(M), s_M)$ contains $(F, \lambda, \mu)$, as
described above. However, since $E$ has odd order, an element in the surgery obstruction group
${L'}_{2}(\ZE)$
is zero provided that its multisignature and ordinary Arf invariant both
vanish (this is a result of Bak and Wall, see \cite[Cor.~2.4.3]{wall-VI}).
The multisignature invariant is trivial since $M$ is a closed manifold
\cite[13B]{wall-book}.  The ordinary Arf invariant of $(F, \lambda, \mu)$ equals the Arf invariant of $\widetilde M$, which vanishes since $22$ is not of the form $2^k -2$ (a
famous result of Browder \cite{browder1}). We can now do surgery  to obtain a representative $[M,c]$ which has
$\widetilde M = S^{11}\times S^{11} \# \Sigma$, where $\Sigma$ is a
homotopy $22$-sphere. Note that the $p$-component of $\pi_{22}^S$ is zero
for $p \geq 3$ (see  \cite[p.~5]{ravenel1}), so we can get the standard
smooth structure on $S^{11}\times S^{11}$.
\end{proof}

\section{Concluding Remarks}
In this final section we will make some additional remarks about the group theory, and explain the significance of constructing actions for our families $\cP$ and $\cE$ of finite groups, as a step towards answering our original question.

\smallskip
\noindent
\textbf{(I).} Blackburn has given a classification of $p$-groups of rank $2$. Here we restate his result for $3$-groups (see Theorem  4.1 in \cite{blackburn2} and Theorem 3.1 in \cite{leary3}). If $G$ is a rank two $3$-group of order $3^k$ then one of the following holds 
\begin{enumerate}
\item $G$ is a metacyclic $3$-group,
\item $G=P(k)$, $k \geq 3$, a group in $\cP$,
\item $G=B(k, \epsilon)$, $k \geq 4$, 
\item $G$ is a $3$-group of maximal class. 
\end{enumerate} 
The $3$-groups listed in the first item all act freely and smoothly on a product of two equidimensional spheres (see \cite[p.~538]{lewis1}). An explicit construction and the proof of Theorem A shows that the groups in the second item on this list  act freely on $S^5\times S^5$. Theorem B shows that  the group $B(4,-1)$ in the third item also acts freely on a product of two equidimensional spheres, but of dimension $S^{11} \times S^{11}$.

\smallskip
\noindent
\textbf{(II).} It was shown by Benson and Carlson (see Theorem 4.4 in \cite{benson-carlson1}) that
free actions of a rank two group on a product of two equidimensional spheres could not be ruled out by cohomological methods alone. Hence the arguments given for certain non-existence claims in \cite{alzubaidy1}, \cite{alzubaidy2}, \cite{thomas1}, and \cite{yagita1} about extraspecial $p$-groups are not valid. In fact, 
Theorems A and B  applied to the extraspecial $3$-group $E(3)$ of order $27$ and exponent $3$ give specific counterexamples to the results claimed in these papers. The possible sphere dimensions for this group $E(3)$, not previously ruled out by cohomological methods, are of the form $S^{6r-1} \times S^{6r-1}$, and our examples show existence in the first two cases ($r=1, 2$).

\smallskip
\noindent
\textbf{(III).} For any prime number $p$, the group $E(p)$ is a subgroup of $G_2$, but $E(2) \cong A_4$. Since $A_4$ is ruled out by \cite{oliver1}, Theorem B shows  that the group $E(p)$ can act freely and smoothly on a product of two equidimensional spheres if and only if $p>2$.  More information about the odd order subgroups of $G_2$ can be found in  \cite{cohen-wales1} (the finite subgroups are not all contained in $SU(3)$, but we don't know  if this is true for the odd order subgroups). The result of Oliver \cite{oliver1}  was also proved and extended by Carlsson \cite{carlsson1} and Silverman \cite{silverman1}.

\smallskip
\noindent
\textbf{(IV).} Let $G$ be a group in  $\cP$ or  $\cE$. Let $axe(G)$ be the minimum number of linear representations of $G$ required for $G$ to act freely on a product of spheres where the action on each sphere is induced from one of these representations. By \cite[Proposition 3.3]{barker-yalcin1}, it easy to see that $axe(G) \geq 3$. Hence $G$ can not act freely on a product of two sphere with a linear action on each sphere. Moreover $G$ is not a subgroup of $Sp(2)$, hence the free actions constructed in \cite{adem-davis-unlu} will not be on a product of two equidimensional spheres. We also remark that $G$ can not be written as a product of two groups with periodic cohomology, while all the subgroups of $G$ can. So the families $\cP$ and $\cE $ are two infinite families of minimal new examples not included in \cite{h4}.

\providecommand{\bysame}{\leavevmode\hbox to3em{\hrulefill}\thinspace}
\providecommand{\MR}{\relax\ifhmode\unskip\space\fi MR }
\providecommand{\MRhref}[2]{%
  \href{http://www.ams.org/mathscinet-getitem?mr=#1}{#2}
}
\providecommand{\href}[2]{#2}


\begin{thebibliography}{10}

\bibitem{adem-davis-unlu}
A.~Adem, J.~F. Davis, and {\"O}.~{\"U}nl{\"u}, \emph{Fixity and free group
  actions on products of spheres}, Comment. Math. Helv. \textbf{79} (2004),
  758--778.

\bibitem{adem-smith}
A.~Adem and J.~H. Smith, \emph{Periodic complexes and group actions}, Ann. of
  Math. (2) \textbf{154} (2001), 407--435.

\bibitem{alzubaidy1}
K.~Alzubaidy, \emph{Free actions of {$p$}-groups {$(p>3)$} on {$S\sp{n}\times
  S\sp{n}$}}, Glasgow Math. J. \textbf{23} (1982), 97--101.

\bibitem{alzubaidy2}
\bysame, \emph{Free actions on {$(S\sp n)\sp k$}}, Mathematika \textbf{32}
  (1985), 49--54.

\bibitem{barker-yalcin1}
L.~Barker and E.~Yal{\c{c}}{\i}n, \emph{A new notion of rank for finite
  supersolvable groups and free linear actions on products of spheres}, J.
  Group Theory \textbf{6} (2003), 347--364.

\bibitem{benson-carlson1}
D.~J. Benson and J.~F. Carlson, \emph{Complexity and multiple complexes}, Math.
  Z. \textbf{195} (1987), 221--238.

\bibitem{blackburn2}
N.~Blackburn, \emph{Generalizations of certain elementary theorems on
  {$p$}-groups}, Proc. London Math. Soc. (3) \textbf{11} (1961), 1--22.

\bibitem{borel-hirzebruch1}
A.~Borel and F.~Hirzebruch, \emph{Characteristic classes and homogeneous
  spaces. {I}}, Amer. J. Math. \textbf{80} (1958), 458--538.

\bibitem{bredon-kosinski1}
G.~E. Bredon and A.~Kosi{\'n}ski, \emph{Vector fields on {$\pi $}-manifolds},
  Ann. of Math. (2) \textbf{84} (1966), 85--90.

\bibitem{browder1}
W.~Browder, \emph{The {K}ervaire invariant of framed manifolds and its
  generalization}, Ann. of Math. (2) \textbf{90} (1969), 157--186.

\bibitem{carlsson1}
G.~Carlsson, \emph{On the rank of abelian groups acting freely on
  {$(S\sp{n})\sp{k}$}}, Invent. Math. \textbf{69} (1982), 393--400.

\bibitem{cohen-wales1}
A.~M. Cohen and D.~B. Wales, \emph{Finite subgroups of {$G\sb{2}({\bf C})$}},
  Comm. Algebra \textbf{11} (1983), 441--459.

\bibitem{conner1}
P.~E. Conner, \emph{On the action of a finite group on {$S\sp{n}\times
  S\sp{n}$}}, Ann. of Math. (2) \textbf{66} (1957), 586--588.

\bibitem{h-trieste}
I.~Hambleton, \emph{Algebraic {$K$}- and {$L$}-theory and applications to the
  topology of manifolds}, Topology of high-dimensional manifolds, No. 1, 2
  (Trieste, 2001), ICTP Lect. Notes, vol.~9, Abdus Salam Int. Cent. Theoret.
  Phys., Trieste, 2002, pp.~299--369.

\bibitem{h4}
\bysame, \emph{Some examples of free actions on products of spheres}, Topology
  \textbf{45} (2006), 735--749.

\bibitem{kervaire-milnor1}
M.~A. Kervaire and J.~W. Milnor, \emph{Groups of homotopy spheres. {I}}, Ann.
  of Math. (2) \textbf{77} (1963), 504--537.

\bibitem{leary3}
I.~J. Leary, \emph{The cohomology of certain groups}, Ph.D. thesis, University
  of Cambridge, 1991,
  (http://www.maths.abdn.ac.uk/~bensondj/html/archive/leary.html).

\bibitem{lewis1}
G.~Lewis, \emph{The integral cohomology rings of groups of order {$p\sp{3}$}},
  Trans. Amer. Math. Soc. \textbf{132} (1968), 501--529.

\bibitem{milnor4}
J.~Milnor, \emph{A procedure for killing homotopy groups of differentiable
  manifolds.}, Proc. Sympos. Pure Math., Vol. III, American Mathematical
  Society, Providence, R.I, 1961, pp.~39--55.

\bibitem{oliver1}
R.~Oliver, \emph{Free compact group actions on products of spheres}, Algebraic
  topology, Aarhus 1978 (Proc. Sympos., Univ. Aarhus, Aarhus, 1978), Lecture
  Notes in Math., vol. 763, Springer, Berlin, 1979, pp.~539--548.

\bibitem{ra10}
A.~Ranicki, \emph{The algebraic theory of surgery. {I}. {F}oundations}, Proc.
  London Math. Soc. (3) \textbf{40} (1980), 87--192.

\bibitem{ravenel1}
D.~C. Ravenel, \emph{Complex cobordism and stable homotopy groups of spheres},
  AMS Chelsea Publishing, 2004.

\bibitem{silverman1}
J.~H. Silverman, \emph{Subgroup conditions for groups acting freely on products
  of spheres}, Trans. Amer. Math. Soc. \textbf{334} (1992), 153--181.

\bibitem{swan1}
R.~G. Swan, \emph{Periodic resolutions for finite groups}, Ann. of Math. (2)
  \textbf{72} (1960), 267--291.

\bibitem{thomas1}
C.~B. Thomas, \emph{Free actions by {$p$}-groups on products of spheres and
  {Y}agita's invariant {$po(G)$}}, Transformation groups (Osaka, 1987), Lecture
  Notes in Math., vol. 1375, Springer, Berlin, 1989, pp.~326--338.

\bibitem{wall-VI}
C.~T.~C. Wall, \emph{Classification of {H}ermitian {F}orms. {V}{I}. {G}roup
  rings}, Ann. of Math. (2) \textbf{103} (1976), 1--80.

\bibitem{wall-book}
\bysame, \emph{Surgery on compact manifolds}, second ed., American Mathematical
  Society, Providence, RI, 1999, Edited and with a foreword by A. A. Ranicki.

\bibitem{yagita1}
N.~Yagita, \emph{On the dimension of spheres whose product admits a free action
  by a nonabelian group}, Quart. J. Math. Oxford Ser. (2) \textbf{36} (1985),
  117--127.

\end{thebibliography}
\end{document}